\theoremstyle{plain}
\newtheorem{theorem}{Theorem}[section]
\newtheorem{lemma}[theorem]{Lemma}
\newtheorem{proposition}[theorem]{Proposition} 
\newtheorem{corollary}[theorem]{Corollary}
\newtheorem{conjecture}[theorem]{Conjecture}
\theoremstyle{remark}
\newtheorem{remark}[theorem]{Remark}
\theoremstyle{definition}
\newtheorem{definition}[theorem]{Definition} 
\newtheorem{notation}[theorem]{Notation} 
\newtheorem{example}[theorem]{Example}
\newcommand{\cD}{\mathcal{D}}
\newcommand{\cO}{\mathcal{O}}
\newcommand{\cE}{\mathcal{E}}
\newcommand{\C}{\mathbf{C}}
\newcommand{\N}{\mathbf{N}}
\newcommand{\Q}{\mathbf{Q}}
\newcommand{\Z}{\mathbf{Z}}
\newcommand{\Ps}{\mathbf{P}}
\newcommand{\bw}{\mathbf{w}}
\DeclareMathOperator{\sDEF}{DEF}
\DeclareMathOperator{\sing}{sing}
\DeclareMathOperator{\rk}{rank}
\DeclareMathOperator{\rank}{rank}
\DeclareMathOperator{\codim}{codim}
\DeclareMathOperator{\prim}{prim}
\DeclareMathOperator{\Pic}{Pic}
\DeclareMathOperator{\NL}{NL}
\DeclareMathOperator{\Gr}{Gr}
\numberwithin{equation}{section}
\title[Nodal varieties with defect]{Maximal families of nodal varieties with defect}
\author[R.~Kloosterman]{Remke Kloosterman}
\address{Institut f\"ur Mathematik, Humboldt-Universit\"at zu Berlin,
Unter den Linden 6, D-10099 Berlin, Germany} 
\email{klooster@math.hu-berlin.de}
\date{\today}
\thanks{The author thanks Ivan Cheltsov, Slawomir Cynk, Vincenzo Di Gennaro, Brendan Hassett and Orsola Tommasi for several comments on a previous version of this paper. The author would like to thank the referee for several suggestions to improve the presentation.
The  author is partially supported by DFG-grant KL 2244/2-1.}
\subjclass{32S20, 14J30, 14J70, 14M10}
\keywords{Nodal varieties with defect, Noether-Lefschetz theory}
\begin{document}

\begin{abstract}
In this paper we prove that a nodal hypersurface in $\Ps^4$ with defect has at least $(d-1)^2$ nodes, and if it has at most $2(d-2)(d-1)$ nodes and $d\geq 7$ then it contains either a plane or a quadric surface. Furthermore, we prove that a nodal double cover of $\Ps^3$ ramified along a surface of degree $2d$ with defect has at least $d(2d-1)$ nodes. We construct the largest dimensional family of nodal degree $d$ hypersurfaces in $\Ps^{2n+2}$ with defect for $d$ sufficiently large. 
%
\end{abstract}

\maketitle

\section{Introduction}
 Let $n\geq 3$ be an odd integer and $c$ be a positive integer. Let $1\leq w_0\leq \dots\leq w_{n+c}$ and $ 2\leq d_1\leq \dots\leq d_c$ 
be integers. For a nodal complete intersection  $X\subset \Ps(w_0,\dots,w_{n+c})=:\Ps$ of multidegree $d_1,\dots,d_c$ we define the defect of $X$ to be $h^{n+1}(X)-h^{n-1}(X)$.
 In this paper we consider the following problem of determining the minimal number of nodes to have positive defect. 
  In this generality the problem is too hard. 
In the sequel we  concentrate on the two  special cases: hypersurfaces in $\Ps^{n+1}$ and double solids, i.e., hypersurfaces of degree $2k$ in $\Ps(k,1,1,1,1)$.
In subsequent papers we will discuss the case of  three-dimensional complete intersections in $\Ps^{3+c}$
and of elliptic threefolds over $\Ps^2$, i.e., hypersurfaces of degree $6k$ in $\Ps(2k,3k,1,1,1)$.

We start by recalling some previous results on this problem.
For hypersurfaces in $\Ps^4$, Cheltsov showed that  the minimal number of nodes to have defect is $(d-1)^2$ \cite{ChelFac} and that if $X$ has defect and $(d-1)^2$ nodes then $X$ contains a plane \cite{ChelPlane}. This improves a previously known bound by Ciliberto and Di Gennaro \cite{CiroHS}. Ciliberto and Di Gennaro showed that if a hypersurface with defect has at most $2(d-2)(d-1)$ nodes and the defect is caused by a smooth surface, then $X$ contains either a plane or a quadric surface. 




To illustrate our methods we start by giving a new proof of Cheltsov's theorem
\begin{theorem}[{Cheltsov \cite{ChelFac, ChelPlane}}]
Let $X\subset \Ps^4$ be a nodal hypersurface of degree at least 3. Assume that $h^4(X)\geq 2$. Then $X$ has at least $(d-1)^2$ nodes. Moreover, if equality holds then $X$ contains a plane.
 \end{theorem}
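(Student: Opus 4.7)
Following the Noether--Lefschetz philosophy of the paper, I would translate positive defect into a statement about the Jacobian ring of $F$, then extract the numerical bound via liaison/Cayley--Bacharach in $\Ps^4$.

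Let $F$ be the equation of $X$, $J_F=(F_{x_0},\ldots,F_{x_4})$ its Jacobian ideal, $\Sigma$ the node set, and $I(\Sigma)$ the saturated ideal of $\Sigma$. Since $X$ is nodal, $V(J_F)=\Sigma$ scheme-theoretically. The starting point is the defect formula of Dimca--Werner for nodal threefolds in $\Ps^4$:
\[
h^4(X)-h^2(X) \;=\; \dim\bigl(I(\Sigma)_{2d-5}/(J_F)_{2d-5}\bigr).
\]
Combined with $h^2(X)=1$ (Lefschetz), the hypothesis $h^4(X)\geq 2$ is equivalent to the existence of a form $G$ of degree $2d-5$ vanishing on $\Sigma$ with $G\notin J_F$.

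Given such a $G$, I would prove $|\Sigma|\geq(d-1)^2$ as follows. Four sufficiently general partial derivatives, say $F_{x_0},\ldots,F_{x_3}$, cut out a zero-dimensional complete intersection $Z$ of degree $(d-1)^4$ containing $\Sigma$. Since $(F_{x_0},\ldots,F_{x_3})\subset J_F$, the condition $G\notin J_F$ forces $G$ to be non-zero in the homogeneous coordinate ring of $Z$ in degree $2d-5$. Applying liaison in $\Ps^4$ together with Gorenstein duality on $Z$ (whose dualizing sheaf is $\mathcal{O}_Z(4d-9)$), one links $\Sigma$ inside $Z\cap V(G)$ against a residual subscheme and deduces $|\Sigma|\geq(d-1)^2$ via the resulting Bezout count.

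For the equality case $|\Sigma|=(d-1)^2$, the liaison bound is saturated only when $\Sigma$ itself is a complete intersection of type $(1,1,d-1,d-1)$: $\Sigma=V(L_1,L_2,A,B)$ with $L_1,L_2$ linear and $A,B$ of degree $d-1$. Setting $\Pi=V(L_1,L_2)$, the restriction $F|_\Pi$ is a plane curve of degree $d$ with singular points at all $(d-1)^2$ nodes of $\Sigma$. Since any non-zero plane curve of degree $d$ has at most $\binom{d}{2}<(d-1)^2$ singular points (for $d\geq 3$), $F|_\Pi\equiv 0$ and hence $\Pi\subset X$.

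The main obstacle is the liaison step: a naive Cayley--Bacharach argument (using only $G\in I(\Sigma)$) yields merely $|\Sigma|\geq 2d-3$, attained by configurations of points on a line. Reaching the sharp bound $(d-1)^2$ requires the full strength of $G\notin J_F$, i.e.~the non-triviality of $[G]$ in the Jacobian ring, to propagate through the liaison calculation and rule out these weaker configurations (which cannot in fact arise from actual singular loci of degree $d$ hypersurfaces, unless $X$ contains a line, in which case they do not produce an extra class $G$).
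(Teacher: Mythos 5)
Your starting point is not the defect formula the argument needs. For a nodal threefold in $\Ps^4$ the defect is $\delta=\#\Sigma-\dim(S/I(\Sigma))_{2d-5}$ (Proposition~\ref{prpHS}), i.e.\ it measures the failure of the nodes to impose \emph{independent conditions} on forms of degree $2d-5$; it is not $\dim\bigl(I(\Sigma)_{2d-5}/(J_F)_{2d-5}\bigr)$. A one-nodal quintic already shows the difference: its defect is $0$, yet $I(\Sigma)_5$ has codimension $1$ in $S_5$ (dimension $125$), while $(J_F)_5$ is spanned by the $25$ products of the partials with linear forms, so the quotient you write down is huge. Consequently the claimed equivalence ``$h^4(X)\geq 2$ iff there exists $G\in I(\Sigma)_{2d-5}\setminus J_F$'' is false, and every later appeal to the non-triviality of $[G]$ in the Jacobian ring rests on a mistranslation of the hypothesis. (There is a Jacobian-ring description of the defect, but it is via the cokernel of the evaluation map $S_{2d-5}\to H^0(\mathcal{O}_\Sigma)$, equivalently in a different degree of $I/J_F$, not the one you use.)

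Even granting a correct translation, the decisive quantitative step is missing. The scheme $Z=V(F_{x_0},\dots,F_{x_3})$ is already a zero-dimensional complete intersection, so ``linking $\Sigma$ inside $Z\cap V(G)$'' is not a linkage, and no mechanism is given by which an extra form of degree $2d-5$ through $\Sigma$ forces $\#\Sigma\geq(d-1)^2$; as you concede, the naive Cayley--Bacharach count only gives a bound linear in $d$, and the assertion that $G\notin J_F$ will ``propagate through the liaison calculation'' is precisely the content that has to be proved. The paper supplies this mechanism differently: restrict to a general hyperplane $H$ with $X\cap H$ smooth, deduce $h_{I_H}(2d-4)>0$ from the defect, embed $I_H$ in an Artinian Gorenstein quotient $I'$ of socle degree $2d-4$, use that the restricted partials make $I'_{d-1}$ base point free so that Corollary~\ref{corGreen} forces $h_{I'}(k)\geq 2d-3-k$ for $d-2\leq k\leq 2d-4$, and then Gorenstein duality gives $h_{I'}(k)\geq k+1$ for $k\leq d-2$; summing yields $(d-1)^2$. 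Nothing in your proposal plays this role, and the equality case inherits the gap, since the claim that the bound is saturated only when $\Sigma$ is a $(1,1,d-1,d-1)$ complete intersection is exactly what the Hilbert-function analysis establishes. Finally, your last step (``a nonzero degree-$d$ plane curve has at most $\binom{d}{2}$ singular points'') fails when $F|_\Pi$ is non-reduced; the paper avoids this by showing, via local coordinates at the nodes, that $f\in(f_1,f_2)$, whence the plane lies on $X$.
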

We included this proof because it is a good illustration of our techniques and is significantly different from the one Cheltsov gave.

For fixed integers $n$ and $d$ let  $\sDEF_d\subset  \C[x_0,\dots,x_{2n+2}]_{d}$ be the locus of polynomials $f$ such that $V(f)$ is a nodal hypersurface with defect.
\begin{theorem} 
Fix $n\in \mathbf{N}, n\geq 2$. 
Then there exists a $D$ such that
\[ \codim L\geq    \binom{d+n+1}{n+1}-(n+1)(n+2)\]
holds for every $d>D$ and every irreducible component $L$ of $\sDEF_d$.

Moreover, if $n=1$ or Conjecture 1 of \cite{OtwHil} holds then we can take $D=2$ and any hypersurface in $\sDEF_d$ has at least $(d-1)^{n+1}$ nodes.
\end{theorem}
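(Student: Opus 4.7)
The plan is to bound from below the codimension of any irreducible component $L \subset \sDEF_d$ by reducing the problem to the failure of node-sets to impose independent conditions on an explicit linear system. For a nodal hypersurface $X = V(f) \subset \Ps^{2n+2}$ of degree $d$ with singular locus $\Sigma$, the theory of Jacobian rings (in the style of Dimca and Ciliberto--Di~Gennaro) relates positive defect to $\Sigma$ failing to impose independent conditions on a specific graded piece of the Jacobian ideal of $f$. Hence membership in $L$ forces every nearby $f'$ to share this dependency, which is the geometric backbone of the codimension estimate.

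For the construction side, consider the locus $\cL_0$ of hypersurfaces of degree $d$ containing some linear $\Lambda \cong \Ps^{n+1}$. Fixing $\Lambda$ imposes $\binom{d+n+1}{n+1}$ independent linear conditions, and the Grassmannian $\Gr(n+2, 2n+3)$ has dimension $(n+1)(n+2)$, so $\cL_0$ has codimension exactly $\binom{d+n+1}{n+1}-(n+1)(n+2)$. A direct calculation in coordinates adapted to $\Lambda$ (writing $f = \sum_{i=n+2}^{2n+2} x_i g_i$ with $\deg g_i = d-1$) shows that the singular locus of $V(f)$ on $\Lambda$ is the complete intersection $\{g_{n+2}=\dots=g_{2n+2}=0\}$, which generically consists of $(d-1)^{n+1}$ ordinary nodes. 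Hence $\cL_0$ is an irreducible component of $\sDEF_d$ achieving the claimed codimension, which matches the target bound and, in the conjectural case, the asserted node count.

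The main task is to show every other component $L$ has codimension at least that of $\cL_0$. Here I would invoke the structure theorem of Otwinowska from \cite{OtwHil}: any zero-dimensional configuration in $\Ps^{2n+2}$ that fails to impose independent conditions on the relevant graded piece must, once the degree is large enough, contain a subconfiguration of roughly $(d-1)^{n+1}$ points concentrated on a linear $\Ps^{n+1}$. Applying this to $\Sigma$, for $d > D$, forces every general $f \in L$ to vanish along such a $\Ps^{n+1}$, embedding $L$ into $\cL_0$ up to Grassmannian moves and therefore bounding $\codim L \geq \codim \cL_0$. The asymptotic form of Otwinowska's theorem yields the existence of $D$; Conjecture~1 of \cite{OtwHil} (or, for $n=1$, the case already handled by Theorem~1.1) gives the effective statement, simultaneously upgrading to $D=2$ and to the pointwise bound $|\Sigma| \geq (d-1)^{n+1}$.

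The principal obstacle is the non-effectivity of the unconditional Otwinowska statement: one extracts a threshold $d > D$ beyond which degenerate node-configurations are classified, but $D$ is not explicit. A secondary issue is that along $L$ the set $\Sigma$ itself deforms, so one must promote a pointwise statement about a single configuration into a statement about the entire component via a semicontinuity/specialization argument, and then verify that the extremal $L$ is in fact $\cL_0$ rather than a proper subfamily of it; this is the step where the codimension inequality becomes an equality.
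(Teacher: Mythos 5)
There is a genuine gap in the central step. You invoke Otwinowska's result as a ``structure theorem'' asserting that any node configuration failing to impose independent conditions must (for large $d$) concentrate on a linear $\Ps^{n+1}$, and you then conclude that every general $f\in L$ vanishes along such a $\Ps^{n+1}$, so that $L$ embeds into $\cL_0$. This is not what \cite[Th\'eor\`eme 1]{OtwHil} says, and the conclusion you draw from it is false: there are components of $\sDEF_d$ not contained in $\cL_0$, for instance hypersurfaces containing an $n$-dimensional quadric are non-factorial/have defect, and their general member contains no linear $\Ps^{n+1}$. Otwinowska's theorem is a Hilbert-function bound ($\codim I_d\geq\binom{d+n}{n}-(n+1)^2$, with a characterization of equality up to degree $d$) for Artinian Gorenstein quotients containing a complete intersection of multidegree $(d-1)^{2n+2}$; that hypothesis is only available after cutting with a general hyperplane $H$, where the partials of the smooth section $X_H$ supply the complete intersection. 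So the bound cannot be applied directly to the node scheme $\Sigma\subset\Ps^{2n+2}$ as you propose, and even in the equality case it yields only that the relevant ideal agrees with a $(1^{n+1},(d-1)^{n+1})$ complete intersection ideal in low degrees, not containment of $L$ in $\cL_0$.

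The paper's actual argument, which your sketch is missing, runs as follows: components of $\sDEF_d$ are components of equisingular deformation loci (Lemma~\ref{lemDefLoc}), so $\codim L$ is bounded below by $h_J(d)$, where $J$ is the ideal of the nodes and $J^{\sat}_d$ is the tangent space; the defect class restricted to a general hyperplane places $J_d|_H$ inside the tangent space of a Noether--Lefschetz component $\NL(\gamma_H)$ in $\Ps^{2n+1}$, to which Otwinowska's bound applies; then one faces a dichotomy. If equality holds, the Gorenstein ideal coincides up to degree $d$ with a $(1^{n+1},(d-1)^{n+1})$ complete intersection and summing its Hilbert function over degrees $\leq d$ gives $h_J(d)\geq p_{n,d}$. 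If the codimension is strictly larger, one must rule out intermediate components: the paper uses the classification of the second-largest NL component (quadrics, via \cite{OtwBig}) together with Green's hyperplane restriction theorem and explicit Macaulay expansions to show $h_J(d)>p_{n,d}$ for $d$ large. Your proposal has no mechanism for this second branch, and the conditional part ($D=2$ and the $(d-1)^{n+1}$ node count under \cite[Conjecture 1]{OtwHil}) likewise needs the degreewise comparison $h_I(k)\geq h_{I'}(k)$ summed against the Hilbert polynomial of $J$, not the (false) assertion that the nodes lie on a linear $\Ps^{n+1}$. Your computation of $\codim\cL_0$ and of the nodes of $\sum x_ig_i$ is fine, but it only reproduces the example showing the bound is attained, which is the easy direction.
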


Otwinowska showed that \cite[Conjecture 1]{OtwHil} is implied by the Conjecture of Eisenbud, Green and Harris on the Hilbert functions of an ideal containing a complete intersection ideal.

We also consider the case of double covers of $\Ps^3$. In this case we recover a result by Cheltsov:
\begin{theorem}[{Cheltsov \cite{Chelwps}}] 
Let $f\in \C[x_0,x_1,x_2,x_3]$ be a homogeneous polynomial of degree $2d$ such that $V(f)$ is a nodal surface. Let $X:y^2=f$ be the double cover branched along $V(f)$. Suppose $h^4(X)>1$. Then $X$ has at least $d(2d-1)$ nodes.
 \end{theorem}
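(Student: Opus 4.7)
The plan is to mirror the new proof of Cheltsov's Theorem~1.1 given earlier, translating it to the weighted projective setting. The driving idea is to rewrite the defect of $X$ as a graded piece of an ideal quotient involving the nodes, and then convert the resulting algebraic constraint into a Hilbert-function lower bound on $|Z|$.

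\textbf{Defect formula.} Realise $X$ as the degree-$2d$ hypersurface $F := y^2 - f = 0$ inside the weighted projective space $\Ps(1,1,1,1,d)$. Its Jacobian ideal is $(y, f_{x_0}, f_{x_1}, f_{x_2}, f_{x_3})$, which modulo $y$ coincides with the ordinary Jacobian ideal $J_f \subset \C[x_0, x_1, x_2, x_3]$. By Steenbrink's version of the Griffiths residue isomorphism for weighted projective hypersurfaces, combined with Dimca's analysis of the nodal correction, I would obtain
\[
h^4(X) - h^2(X) \;=\; \dim \bigl(I_Z/J_f\bigr)_{3d-4},
\]
where $Z$ is the set of nodes of $V(f)$ (equivalently, of $X$). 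The critical degree $3d - 4 = 2\deg(F) - \sum_i w_i$ is forced by the middle Hodge piece of a smooth double cover.

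\textbf{Producing a defect polynomial.} The hypothesis $h^4(X) > 1$ forces positivity of the defect, hence there exists a homogeneous polynomial $g \in \C[x_0, \ldots, x_3]$ of degree $3d - 4$ vanishing on $Z$ with $g \notin J_f$. This $g$ will play the role that the ``defect polynomial'' plays in the new proof of Theorem~1.1.

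\textbf{Numerical estimate.} To extract the lower bound $|Z| \geq d(2d-1)$, I would combine (i) an upper bound on $\dim (I_Z)_{3d-4}$ in terms of $|Z|$, obtained from the Hilbert function of zero-dimensional subschemes of $\Ps^3$ together with a Cayley-Bacharach/liaison argument against the complete intersection cut out by three appropriately chosen partials $f_{x_i}$, with (ii) an explicit lower bound on $\dim (J_f)_{3d-4}$ coming from the Koszul complex on the four partials $f_{x_0}, f_{x_1}, f_{x_2}, f_{x_3}$ (forms of degree $2d-1$), corrected by the Euler relation $\sum x_i f_{x_i} = 2d f$ and by the drop in rank of the Jacobian matrix at each node. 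The strict inequality $\dim (I_Z)_{3d-4} > \dim(J_f)_{3d-4}$ imposed by $[g]$ then translates into the desired bound.

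\textbf{Main obstacle.} The delicate point is the sharpness of the numerical comparison in Step~3: one must show that if $|Z| < d(2d-1)$ then $(I_Z)_{3d-4} \subseteq (J_f)_{3d-4}$, contradicting the existence of $g$. The number $d(2d-1)$ is naturally the intersection number of $V(f)$ with a plane and a surface of degree $d$, which suggests that extremal configurations correspond to nodes of $V(f)$ lying on a curve of degree $d$ cut out by a special syzygy among the partials---the analogue, in the double-solid setting, of the ``plane containing $X$'' phenomenon of Theorem~1.1. Managing the fact that $J_f$ is \emph{not} a complete intersection (so $\C[x]/J_f$ has positive-dimensional support along the cones over the nodes) is the principal technical burden, and is what necessitates the refined Koszul/Cayley-Bacharach bookkeeping above.
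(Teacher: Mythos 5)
Your starting formula is incorrect, and this undermines Steps 1 and 2. For a nodal hypersurface of weighted projective space the defect is the number of \emph{dependent} conditions that the nodes impose on forms of the critical degree, i.e. $\delta=\#Z-\dim\bigl(S/I_Z\bigr)_{3d-4}$ (this is Proposition~\ref{prpHS}, quoted from Dimca); it is not $\dim\bigl(I_Z/J_f\bigr)_{3d-4}$. The two quantities would agree only if $\dim\bigl(S/J_f\bigr)_{3d-4}=\#Z$, which fails: the partials of $f$ have degree $2d-1>3d-4$ exactly when $d=2$, so for a quartic branch surface $(J_f)_{3d-4}=(J_f)_2=0$, and a quartic with a single node has $h^4(X)=1$ (no defect) while $\dim\bigl(I_Z/J_f\bigr)_2=\dim (I_Z)_2=9$. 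For the same reason the ``defect polynomial'' $g\in (I_Z)_{3d-4}\setminus (J_f)_{3d-4}$ of your Step 2 exists for essentially every nodal branch surface, with or without defect, so it cannot transport the hypothesis $h^4(X)>1$ into a numerical estimate. What positive defect actually gives is that $Z$ fails to impose independent conditions in degree $3d-4$, i.e. $h_{I_Z}(3d-4)<\#Z$ in the paper's notation $h_I(k)=\dim(S/I)_k$.

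Even if Step 1 were repaired, the entire content of the theorem is the estimate you defer under ``Main obstacle'': you never show that $\#Z<d(2d-1)$ leads to a contradiction, and the proposed Koszul/Cayley--Bacharach comparison of $(I_Z)_{3d-4}$ with $(J_f)_{3d-4}$ is only sketched; it is not clear it can be made to work, precisely because, as you observe, $J_f$ is not a complete intersection and controls the nodes only after saturation. The paper's argument avoids $J_f$ in the critical degree altogether: it restricts $I=I_Z$ to a general plane $H$ (no node lies on $H$ and the branch curve $V(f)\cap H$ is smooth), uses the exact sequence $0\to (R/I)_{k-1}\to (R/I)_k\to (S/I_H)_k\to 0$ together with $h_I(3d-4)<p_I(3d-4)$ to conclude $h_{I_H}(3d-3)>0$, enlarges $I_H$ to an ideal $I'$ with $S/I'$ Artinian Gorenstein of socle degree $3d-3$, notes that $I'_{2d-1}$ contains the restricted partials of $f$ and is therefore base point free, and then bounds the Hilbert function termwise: $h_{I'}(k)\geq 3d-2-k$ for $2d-2\leq k\leq 3d-2$ by Corollary~\ref{corGreen}, hence $h_{I'}(k)\geq k+1$ for $k\leq d-1$ by Gorenstein duality, and $h_{I'}(k)\geq d$ for $d\leq k\leq 2d-2$ by Macaulay's theorem; summing yields $\#Z\geq\sum_k h_{I_H}(k)\geq d(2d-1)$. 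Some mechanism of this kind --- restriction to a plane, an Artinian Gorenstein enlargement with its duality, and base-point-freeness of the partials in degree $2d-1$ --- is the missing idea that your plan would need in place of the direct comparison with $(J_f)_{3d-4}$.
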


 In the case of hypersurfaces in $\Ps^4$ we prove the Ciliberto-Di Gennaro conjecture for $d\geq 7$:
 \begin{theorem}
  Let $X\subset \Ps^4$ be a nodal hypersurface of degree at least 7. Suppose that $X$ is non-factorial and that $X$ has at most $2(d-2)(d-1)$ nodes then
  either $X$ contains a plane and has $(d-1)^2$ nodes or $X$ contains a quadric surface and has at least $2(d-1)(d-2)$ nodes.
 \end{theorem}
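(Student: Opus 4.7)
The strategy is to classify, by degree, the minimal-degree surface $S \subset X$ which witnesses the non-factoriality, using the Hilbert-function techniques from the proof of Theorem~1.2, and then to identify the two alternatives directly. Set $\Sigma = \operatorname{Sing}(X)$ and $N = |\Sigma|$; by hypothesis $N \le 2(d-1)(d-2)$ and $d \ge 7$. Non-factoriality produces an irreducible reduced surface $S \subset X$ which is Weil but not Cartier on $X$; fix such an $S$ of minimal degree $e \ge 1$ in $\Ps^4$. The plan is to force $e \in \{1,2\}$.

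The key step translates non-Cartier-ness into a Hilbert-function statement on $\Sigma_S := \Sigma \cap S$. Locally at each node $p \in \Sigma_S$, the surface $S$ passes through a single branch of $X$, and globally this forces $\Sigma_S$ to impose dependent conditions on $|\cO_S(d-e)|$. Running the argument of Theorem~1.2 with $\Sigma_S \subset S$ in place of $\Sigma \subset \Ps^4$, and invoking the Otwinowska Hilbert-function bounds \cite{OtwHil}, I would derive a lower bound of the form $|\Sigma_S| \ge e(d-1)(d-e)$. An elementary estimate gives $e(d-1)(d-e) > 2(d-1)(d-2)$ for $3 \le e \le d-3$ and $d \ge 7$; the residual cases $e \in \{d-2,d-1\}$ are excluded separately by exhibiting a Cartier divisor linearly equivalent to $S$, built from the complement of $S$ in a complete intersection of $X$ with a hypersurface of degree $e$. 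Hence $e \in \{1,2\}$.

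The case $e = 1$ says $S$ is a plane, and Theorem~1.1 yields $N = (d-1)^2$. The case $e = 2$ says $S$ is a quadric surface; a Cayley--Bacharach count on $S$, treating separately the smooth-quadric, quadric-cone, and pair-of-planes subcases, gives $|\Sigma_S| \ge 2(d-1)(d-2)$. The main obstacle I expect is making the inequality $|\Sigma_S| \ge e(d-1)(d-e)$ unconditional for every $3 \le e \le d-3$: the Hilbert-function input of Theorem~1.2 is only conditional in full generality (on the Eisenbud--Green--Harris conjecture), so a tailor-made argument, based on a refined liaison of $S$ with a residual surface of degree $d-e$ inside $X$ and on a careful enumeration of the initial degree of $I_{\Sigma_S}$ inside the homogeneous coordinate ring of $S$, will probably be required in this narrow range.
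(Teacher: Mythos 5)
Your plan is the Ciliberto--Di Gennaro route: pick a minimal-degree surface $S$ witnessing non-factoriality and rule out $3\le\deg S$ by counting nodes on $S$. The central quantitative step, $|\Sigma_S|\ge e(d-1)(d-e)$ for an arbitrary (possibly singular, possibly non--complete-intersection) surface $S$ of degree $e$, is exactly the hard content, and it is not established: you propose to get it by ``running the argument of Theorem~1.2 with $\Sigma_S\subset S$ in place of $\Sigma\subset\Ps^4$,'' but that argument (and Otwinowska's Hilbert-function bounds) controls the total number of nodes of a hypersurface with defect in terms of its degree; it says nothing about how many nodes lie on a prescribed surface inside a fixed $X$, and the non-Cartier condition does not localize to $\Sigma_S$ in the way your sketch assumes. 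You acknowledge this yourself (``a tailor-made argument \dots will probably be required''), which means the proof is open precisely in the range $3\le e\le d-3$ that the whole strategy hinges on; this is the same obstruction that forced Ciliberto--Di Gennaro to assume $S$ smooth. Secondary gaps: the exclusion of $e\in\{d-2,d-1\}$ via a ``residual Cartier divisor'' is asserted, not proved; in the case $e=1$ the claim $N=(d-1)^2$ does not follow from Theorem~1.1, which gives $N\ge(d-1)^2$ and the converse implication (equality $\Rightarrow$ plane), not ``plane $\Rightarrow$ equality''; and the local statement that $S$ ``passes through a single branch of $X$'' at a node is meaningless for a threefold node, which is analytically irreducible.

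The paper's proof avoids choosing $S$ altogether. It works with the ideal $J$ of all nodes, restricts to a general hyperplane $H$, and embeds $J_H$ in an Artinian Gorenstein ideal $I$ of socle degree $2d-4$. It then splits on the size of $h_I(d-4)$: if $h_I(d-4)>2d-7$, Macaulay/Gotzmann estimates plus Gorenstein duality force more than $2(d-1)(d-2)$ nodes (Lemma~\ref{lemHilbb}); if $h_I(d-4)\le 2d-7$, Voisin's classification of ideals with small Hilbert function shows the low-degree base locus is a line or a conic, and Lemma~\ref{lemdims} together with Noether--Lefschetz theory (Green) upgrades this to the statement that $X_{\sing}$ contains a complete intersection of multidegree $(1,1,d-1,d-1)$ or $(1,2,d-2,d-1)$ (Lemma~\ref{lemHilba}). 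Finally, since the four generators give local coordinates at each node and $f$ vanishes there to order two, $f\in(f_1,f_2)$, so $X$ contains the plane or quadric cut out by the two low-degree generators. None of these steps is conditional on Eisenbud--Green--Harris, which is why the paper obtains the theorem unconditionally for $d\ge 7$, whereas your route stalls exactly where an unconditional Hilbert-function bound on $\Sigma_S$ would be needed.
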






We will now briefly discuss the strategy of proof. 
To reprove Cheltsov's result we use the following strategy: Let $I$ be the ideal of the nodes of $X$, where $X$ is a nodal hypersurface with defect. 
Let $H=V(\ell)$ be a general hyperplane then $X\cap H$ is smooth. In particular the ideal $I_H:=(I,\ell)$ defines an empty scheme. 
Since $X$ has defect this implies that the Hilbert polynomial of $I$ and the Hilbert function of $I$ are different in degree $2d-5$. From this it follows that $h_{I_H}(2d-4)\neq 0$ holds. 
Since the partials of the defining equation for $X$ are contained in $I_{d-1}$, it follows that $I_{d-1}$ has finitely many base points. Therefore $I_{d-1,H}$ is base point free. 
A combination of results from Macaulay and Gotzmann on which functions occur as  Hilbert functions of ideals yields that $h_{I_H}(k)\geq k+1$ for $k\leq d-2$ and that $h_{I_H}(k)\geq 2d-3-k$, for $ d-1\leq k \leq 2d-3$.  The observation $p_I\geq h_I(2d-4)\geq \sum_{k=0}^{2d-4} h_{I_H}(k)$ finishes the proof. The ideal $I_H$ is very similar to the ideal used by Green \cite{GreenF} to determine the largest component of the Noether-Lefschetz locus of surfaces in $\Ps^3$.

 The other proofs are variations of this idea. In the case of an  hypersurface in $\Ps^{2n+2}$ with $n\geq 2$, Macaulay's result is not strong enough to obtain the desired lower bound. In this case we use a result by Otwinowska \cite{OtwHil} instead. However this result bounds $h_{I_H}(k)$ only in a  certain interval. This is sufficient to detect the largest dimensional component, but not to establish the minimal number of nodes. If one assumes a conjecture from \cite{OtwHil}, then one obtains the desired lower bound for the number of nodes.
 The double solid case is very similar to the case of hypersurfaces in $\Ps^4$ and we will not comment on this. 
 
 
 The paper is organized as follows. In Section~\ref{sectMac} we recall several standard results on the Hilbert functions of ideals. In Section~\ref{secNodCI} we recall some standard results on the cohomology of nodal complete intersections. In particular, we present a formula to calculate the defect of a nodal hypersurface. In Section~\ref{sectHSDS} we prove the results for the hypersurfaces and in the double covers, except for the Ciliberto-Di Gennaro conjecture, which is proven in Section~\ref{secCil}. 
 
\section{Macaulay's and Green's result}\label{sectMac}
Let $S=\C[x_0,\dots,x_n]$ and let $I\subset S$ be a homogeneous ideal. Let $h_I$ be the Hilbert function of $I$, i.e., $h_I(k)=\dim (S/I)_k$.

Let $d\geq 1$ be an integer.
Let $c:=h_I(d)$. We can write $c$ uniquely as
\[c= \sum_{i=1}^d \binom{i+\epsilon_i}{i}\]
with $\epsilon_d\geq \epsilon_{d-1}\geq ... \geq \epsilon_1\geq -1$. We call this the \emph{(Macaulay) expansion} of $c$ in base $d$.
This expansion can be obtained inductively as follows: The number $\epsilon_d$ is the largest integer such that $\binom{d+\epsilon_d}{d}\leq c$. The numbers $\epsilon_i$ for $i<d$ are the coefficients in the expansion of $c-\binom{d+\epsilon_d}{d}$ in base $d-1$.

Using the Macaulay expansion of $c$ we define the following  numbers:
 \[c^{\langle d \rangle}:= \sum_{i=1}^d\binom{i+\epsilon_i+1}{i+1},\;  c_{\langle d \rangle}:=\sum_{i=1}^d \binom{i+\epsilon_i-1}{i} ,\; c_{*d}:=\sum_{i=2}^d \binom{i+\epsilon_i-1}{i-1}. \]
Note that  $c\mapsto c_{*d}$, $c\mapsto c^{\langle d \rangle}$ and  $c\mapsto c_{\langle d \rangle}$ are increasing functions in $c$.

Recall the following theorem by Macaulay:
\begin{theorem}[{Macaulay \cite{Mac}}]\label{thmMac} Let $V\subset S_d$ be a linear system and $c=\codim V$. Then the codimension of $V\otimes_{\C} S_1$ in $S_{d+1}$ is at most $c^{\langle d\rangle}$.
\end{theorem}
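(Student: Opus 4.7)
The plan is to prove this via the classical theory of lex-segment subspaces, following Macaulay's original combinatorial method.

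First, I would reduce to the case that $V$ is spanned by monomials. Fix the lexicographic order on $S$ with $x_0>\dots>x_n$ and replace $V$ by its initial subspace $\operatorname{in}(V)\subset S_d$; this preserves $\codim V$, while the inclusion $\operatorname{in}(V)\cdot S_1 \subseteq \operatorname{in}(V\cdot S_1)$ gives $\dim(V\cdot S_1)\geq \dim(\operatorname{in}(V)\cdot S_1)$, hence $\codim(V\cdot S_1)\leq \codim(\operatorname{in}(V)\cdot S_1)$. So it suffices to prove the bound when $V$ is a monomial subspace.

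Second, I would show that, among all monomial subspaces of $S_d$ of codimension $c$, the one maximizing $\codim(V\cdot S_1)$ is the \emph{lex segment}: $V$ spanned by the $\binom{n+d}{d}-c$ lex-largest monomials. The argument proceeds by compression: if $V$ is not a lex segment, one replaces some $m\in V$ by a lex-larger monomial $m'\notin V$, and verifies, variable by variable, that $\codim(V\cdot S_1)$ does not decrease under the swap. Iterating these moves, one is reduced to the lex-segment case.

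Third, I would compute $\codim(V\cdot S_1)$ for the lex segment directly. Partition the $c$ lex-smallest monomials in $S_d$ according to the largest index $j$ of a variable in their support; the resulting sub-counts are exactly the terms $\binom{i+\epsilon_i}{i}$ appearing in the Macaulay expansion of $c$. A parallel partition of the complement of $V\cdot S_1$ in $S_{d+1}$ produces the count $\sum_{i=1}^{d}\binom{i+\epsilon_i+1}{i+1}=c^{\langle d\rangle}$.

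The main obstacle is the compression step. The exchange claim is elementary in spirit but requires careful bookkeeping: one must verify, for each variable $x_j$, that every monomial which entered $V\cdot S_1$ through $m$ still lies in the new product space, via a divisor relation involving $m'$. The initial-ideal reduction and the final explicit enumeration are essentially mechanical once this compression move is established.
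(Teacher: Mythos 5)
First, a remark on the comparison you were asked to make: the paper does not prove this statement at all --- it is quoted as Macaulay's classical theorem with a citation to \cite{Mac} and then used as a black box --- so your proposal has to stand on its own as a proof of the classical result. The route you choose is the standard one, and two of your three steps are fine: the reduction to monomial subspaces via $\mathrm{in}(V)\cdot S_1\subseteq \mathrm{in}(V\cdot S_1)$ gives the inequality of codimensions in the right direction, and the enumeration of the complement of $L\cdot S_1$ for a lex segment $L$ does produce exactly $c^{\langle d\rangle}$.

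The gap is in your second step, and it sits exactly where the real content of the theorem lies. The claim that replacing some $m\in V$ by an arbitrary lex-larger monomial $m'\notin V$ never decreases $\codim(V\cdot S_1)$ is false. Take three variables $x>y>z$, $d=2$, and let $V$ be spanned by $y^2,yz,z^2$, so $c=3$. Then $V\cdot S_1$ is spanned by $xy^2,xyz,xz^2,y^3,y^2z,yz^2,z^3$, which has codimension $3$ in $S_3$. Swapping $z^2$ for the lex-larger $x^2$ (note this is even the ``greedy'' move: $z^2$ is the lex-smallest element of $V$ and $x^2$ the lex-largest missing monomial) gives $V'$ spanned by $x^2,y^2,yz$, and $V'\cdot S_1$ has dimension $8$, hence codimension $2<3$. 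So single lex-ward swaps do not have the monotonicity you need, and ``iterating these moves'' does not carry the bound from $V$ to the lex segment. (The example is of course consistent with the theorem itself, since $c^{\langle 2\rangle}=4$ here; it only refutes the exchange claim.) The known repairs are genuinely more involved: either one uses compression operators that rearrange entire fibers with respect to a variable (Macaulay's original argument, or the Clements--Lindstr\"om compression), combined with induction on the number of variables and a separate analysis of compressed sets that are not lex segments, or one first proves Green's hyperplane-restriction theorem (Theorem~\ref{thmGreenHyp}) and deduces Macaulay's bound from it by induction. As written, your step 2 assumes precisely the combinatorial lemma that constitutes the theorem rather than proving it.
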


We apply this result mostly in the case where $V$ is the degree-$d$ part of an ideal $I$. In this case we can also obtain information on $h_I(d-1)$. 
\begin{corollary}\label{corMacDown}
Let $I\subset S$ be an ideal, $d\geq 2$ an integer and $c:=h_I(d)$. Then  
\[ h_I(d-1)\geq c_{* d} .\]
Moreover, if $\epsilon_{1}$ is nonnegative then $h_I(d-1)> c_{* d} $ holds.
\end{corollary}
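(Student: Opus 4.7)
The plan is to deduce the bound by applying Macaulay's theorem (Theorem~\ref{thmMac}) in degree $d-1$ and then inverting the Macaulay expansion. Since $I$ is a homogeneous ideal, the multiplication map $I_{d-1}\otimes_{\C}S_1\to S_d$ has image contained in $I_d$, so writing $e:=h_I(d-1)=\codim I_{d-1}$, the image has codimension at least $\codim I_d=c$ in $S_d$. Macaulay bounds this codimension from above by $e^{\langle d-1\rangle}$. Hence
\[ e^{\langle d-1\rangle}\geq c. \]

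Next, a shift of summation index rewrites $c_{*d}$ as
\[ c_{*d}=\sum_{i=2}^{d}\binom{i+\epsilon_i-1}{i-1}=\sum_{j=1}^{d-1}\binom{j+\epsilon_{j+1}}{j}. \]
Because $\epsilon_d\geq\epsilon_{d-1}\geq\cdots\geq\epsilon_2\geq -1$, this is a valid Macaulay expansion in base $d-1$, so
\[ (c_{*d})^{\langle d-1\rangle}=\sum_{j=1}^{d-1}\binom{j+\epsilon_{j+1}+1}{j+1}=\sum_{i=2}^{d}\binom{i+\epsilon_i}{i}=c-\binom{1+\epsilon_1}{1}. \]
Since $\binom{1+\epsilon_1}{1}\geq 0$, this gives $(c_{*d})^{\langle d-1\rangle}\leq c\leq e^{\langle d-1\rangle}$; moreover, if $\epsilon_1\geq 0$ then $\binom{1+\epsilon_1}{1}\geq 1$ and the first inequality is strict.

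To finish, I would invoke the (strict) monotonicity of $x\mapsto x^{\langle d-1\rangle}$ noted right after the definition of the Macaulay operations, which yields $c_{*d}\leq e$, with strict inequality when $\epsilon_1\geq 0$. The only point that needs care is the verification that the shifted sum is genuinely the Macaulay expansion of $c_{*d}$ in base $d-1$; once that is in hand, the rest is a direct application of Theorem~\ref{thmMac} together with the ideal property $I_{d-1}\cdot S_1\subseteq I_d$, so there is no real obstacle.
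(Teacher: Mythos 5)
Your proof is correct and is essentially the argument the paper intends: you apply Theorem~\ref{thmMac} to $V=I_{d-1}$ together with $I_{d-1}\cdot S_1\subseteq I_d$ to get $c\leq h_I(d-1)^{\langle d-1\rangle}$, and you correctly observe that the shifted coefficients $\epsilon_d\geq\dots\geq\epsilon_2\geq -1$ form the (unique) Macaulay expansion of $c_{*d}$ in base $d-1$, so that $(c_{*d})^{\langle d-1\rangle}=c-(1+\epsilon_1)$. The only point worth making explicit is that deducing $c_{*d}\leq h_I(d-1)$ from $(c_{*d})^{\langle d-1\rangle}\leq h_I(d-1)^{\langle d-1\rangle}$ in the case $\epsilon_1=-1$ requires $x\mapsto x^{\langle d-1\rangle}$ to be strictly (not merely weakly) increasing, which is true and is what the remark after the definitions asserts.
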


For small $c$ we have the following Macaulay expansions in base $d$:
\begin{itemize}
\item For $c\leq d$ we have $\epsilon_d=\dots=\epsilon_{d-c+1}=0$ and $\epsilon_{d-c}=\dots=\epsilon_1=-1$. Hence $c^{\langle d\rangle}=c$.
\item For $d+1\leq c \leq 2d$ we have $\epsilon_d=1, \epsilon_{d-1}=\dots=\epsilon_{d-a}=0, \epsilon_{d-a-1}=\dots=\epsilon_1=-1$, where $a=c-d-1$.
Hence $c^{\langle d\rangle}=c+1$.
\item For $c=2d+1$ we have $\epsilon_d=\epsilon_{d-1}=1$ and all other $\epsilon_i$ equal $-1$. Hence $c^{\langle d \rangle}=2d+3=c+2$.
\end{itemize}

Applying the previous corollary repeatedly yields
\begin{corollary}\label{corMacLowDeg}
Let $I\subset S$ be an ideal, $d\geq 2$ an integer and $c:=h_I(d)$. For $0\leq k \leq d$ we have that
\[ h_{I}(k)\geq \left\{ \begin{array}{ll} \min(c,k+1) & \mbox{ if }c\leq d;\\
\min(k+(c-d),2k+1) &\mbox{ if } d+1\leq c\leq 2d;\\
2k+1 & \mbox{ if } c=2d+1.\end{array}\right.\]
\end{corollary}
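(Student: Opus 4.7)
The plan is to prove the corollary by downward induction on $\ell$, from $\ell=d$ down to $\ell=k$, iterating Corollary~\ref{corMacDown} at each step: if $h_I(\ell)\geq r$, then monotonicity of the $*$-operator gives $h_I(\ell-1)\geq r_{*\ell}$. The base case is the hypothesis $h_I(d)=c$, and the inductive task reduces to checking that the proposed lower bound $g(\ell)$ reproduces itself well enough, i.e., that $g(\ell)_{*\ell}\geq g(\ell-1)$ at each step.

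To do so I would first extract explicit $*$-values from the three Macaulay expansion patterns listed immediately before the corollary: one finds $r_{*\ell}=r$ for $r<\ell$, $r_{*\ell}=r-1$ for $\ell\leq r\leq 2\ell$, and $r_{*\ell}=2\ell-1$ for $r=2\ell+1$. Case~3 ($c=2d+1$) is then immediate, because $g(\ell)=2\ell+1$ lies in the third regime at every step and self-propagates. In Case~2 ($d+1\leq c\leq 2d$), the bound $g(\ell)=\ell+(c-d)$ drops by one per step throughout the second regime, and in Case~1 ($c\leq d$) the bound $g(\ell)=\min(c,\ell+1)$ splits into a flat phase (for $\ell\geq c$) handled by the first formula and a linear phase (for $\ell\leq c-1$) handled by the second, each step working directly.

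The main obstacle is the single transition step in each of Cases~1 and~2, where the direct formula falls short of $g(\ell-1)$ by exactly one. In Case~1 this occurs at $\ell=c$, where $c_{*c}=c-1$ but the target is $c$; in Case~2 at $\ell=c-d$, where $(2\ell)_{*\ell}=2\ell-2$ but the target is $2\ell-1$. At both of these points the relevant Macaulay expansion satisfies $\epsilon_1=0$ (a direct check from the bullet-point patterns), so the \emph{moreover} clause of Corollary~\ref{corMacDown} upgrades the inequality by one and delivers $g(\ell-1)$. The only loose end is to verify that the argument also works when $h_I(\ell)>g(\ell)$, but in that case the expansion of $h_I(\ell)$ falls into the next regime and the formulas above already give the required bound without appealing to the strict strengthening.
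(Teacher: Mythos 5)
Your overall strategy is exactly the paper's intended one (the paper offers nothing beyond ``applying the previous corollary repeatedly''): descend from degree $d$ by iterating Corollary~\ref{corMacDown}, use monotonicity of $c\mapsto c_{*\ell}$, and rescue the two transition steps with the \emph{moreover} clause. The core of the argument is sound, but two of your stated $*$-values are off, and one of them leaves a step unjustified as written. First, the blanket claim $r_{*\ell}=r-1$ for $\ell\le r\le 2\ell$ fails at $r=2\ell$, where $(2\ell)_{*\ell}=2\ell-2$; you use the correct value later in the transition analysis, so this is only an internal inconsistency. More importantly, $(2\ell+1)_{*\ell}=2\ell-1$ holds only for $\ell\ge 3$: the two nonzero terms of the expansion of $2\ell+1$ in base $\ell$ sit at $i=\ell$ and $i=\ell-1$, and for $\ell=2$ the index $i=1$ is excluded from the sum defining $c_{*\ell}$, so $(5)_{*2}=2$. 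Hence Case~3, and the $2k+1$ regime of Case~2 when $c-d\ge 3$, do \emph{not} self-propagate at the step from degree $2$ to degree $1$: the direct formula only yields $h_I(1)\ge 2$ while the target is $3$. The step is still true, and is repaired by your own device: if $h_I(2)=5$ then $\epsilon_1=1\ge 0$ and the strict form of Corollary~\ref{corMacDown} gives $h_I(1)\ge 3$, while if $h_I(2)\ge 6$ monotonicity together with $6_{*2}=3$ suffices. As written, though, this step is not covered by your formulas.

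Two minor boundary remarks. The final descent from degree $1$ to degree $0$ is not covered by Corollary~\ref{corMacDown} at all (it requires the degree to be at least $2$, and the sum defining $c_{*1}$ is empty); it is trivial, since $h_I(0)=1$ whenever $c>0$, but it should be said. Also, when $h_I(\ell)$ strictly exceeds your bound it need not lie in ``the next regime''; the clean statement is that by monotonicity of $c\mapsto c_{*\ell}$ it suffices to evaluate the $*$-operator at the smallest value of the next regime, which is what your computations implicitly do.
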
 

The following result will be used to detect the Hilbert polynomial of the ideal generated by $I_d$:
\begin{theorem}[{Gotzmann \cite{Gotz}}]\label{thmGotz}
Let $V\subset S_d$ be a linear system and let $J\subset S$ be the ideal generated by $V$. Set $c=h_J(d)$.  If $h_J(d+1)=c^{\langle d \rangle}$ then for all $k\geq d$ we have $h_J(k+1)=h_J(k)^{\langle k \rangle}$. In particular the Hilbert polynomial $p_J(t)$ of $J$ is given by
\[ \sum_{i=1}^d \binom{t+\epsilon_i}{t}\]
and the dimension of $V(J)$ equals $\epsilon_d$. 
\end{theorem}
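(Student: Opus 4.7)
The plan is to prove Gotzmann's persistence theorem by comparing $J$ with the unique lex-segment ideal $L \subset S$ whose degree-$d$ piece $L_d$ has the same dimension as $J_d$, so that $h_L(d)=c$. First I would verify directly that lex-segments achieve equality in Macaulay's bound at every step: unravelling the definitions, $h_L(k+1)=h_L(k)^{\langle k\rangle}$ reduces for each summand in the Macaulay expansion to the binomial identity
\[ \binom{i+\epsilon_i}{i}^{\langle i\rangle} = \binom{i+\epsilon_i+1}{i+1}, \]
which collapses telescopically across the expansion.

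Next, Macaulay's theorem (Theorem~\ref{thmMac}) gives the universal upper bound $h_J(k+1)\leq h_J(k)^{\langle k\rangle}$ for every $k\geq d$, and the hypothesis asserts equality at $k=d$. The persistence claim is that equality at one step forces equality at all later steps, and I would argue this by induction on $k$. Assuming equality holds through step $k-1$, I would replace $J$ by its generic initial ideal in the reverse-lex order (which preserves the Hilbert function) and show that equality in Macaulay's bound in degrees $d,\ldots,k$ forces this initial ideal to agree with $L$ in those degrees; then a direct combinatorial comparison with $L$, using the first paragraph, propagates equality to step $k+1$.

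Once persistence is established, the Hilbert polynomial of $J$ is obtained as the common asymptotic form of $h_J$: summing the binomial identity above term by term shows that $h_J(t)$ stabilises to
\[ p_J(t) = \sum_{i=1}^d \binom{t+\epsilon_i}{t}. \]
The dimension of $V(J)$ is the degree of $p_J$ as a polynomial in $t$; the dominant summand is $\binom{t+\epsilon_d}{t}$, which has degree $\epsilon_d$ since by convention the $\epsilon_i$ are non-increasing in $i$, so $\dim V(J)=\epsilon_d$.

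The main obstacle is precisely the persistence statement, which is the deep content of Gotzmann's theorem. A self-contained proof requires either a careful study of generic initial ideals and the structure of lex-segments (Gotzmann's original route), or a combination of Macaulay's bound with Green's hyperplane restriction theorem, which bounds the Hilbert function of the quotient of $J$ by a generic linear form in terms of $h_J(k)_{\langle k\rangle}$; either way one needs enough rigidity in the extremal case to rule out any slack appearing in subsequent steps, and this is substantially harder than the easier Macaulay bound we use elsewhere in the paper.
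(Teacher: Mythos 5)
There is a genuine gap, and you in effect acknowledge it yourself in your last paragraph: the persistence step, which is the entire content of the theorem, is never actually proved. Your induction hinges on the assertion that equality in Macaulay's bound in degrees $d,\dots,k$ forces the (reverse-lex) generic initial ideal of $J$ to coincide with the lex-segment ideal $L$ in those degrees. That claim is unproved in your sketch and is not correct as stated: passing to the generic initial ideal yields a Borel-fixed (strongly stable) monomial ideal, but extremal Macaulay growth does not force it to be the lex ideal, so the ``direct combinatorial comparison with $L$'' has nothing to compare against. What one actually has to do at this point is either Gotzmann's original combinatorial analysis of ideals with extremal growth, Green's argument via the hyperplane restriction theorem (Theorem~\ref{thmGreenHyp}) applied to a generic linear form, or a separate combinatorial persistence argument for Borel-fixed ideals; any of these is a substantive piece of work, and without one of them your second paragraph reduces the theorem to itself. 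Note also that your induction never visibly uses the hypothesis that $J$ is generated by $V$ in degree $d$; this hypothesis is essential (an ideal with extra generators in degrees $>d$ can have equality at the first step and fail it later), so a correct argument must invoke it explicitly.

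For what it is worth, the paper does not prove this statement at all: it is quoted as Gotzmann's theorem with a citation, and only its consequences (e.g.\ Corollary~\ref{corGreen}) are derived in the text. So the expected standard here is either to cite the result, as the paper does, or to give a genuinely complete proof of persistence; the first and third paragraphs of your sketch (lex segments achieve equality step by step, and the identification of the Hilbert polynomial and of $\dim V(J)=\epsilon_d$ from the degree of the dominant binomial term) are fine, but they are the routine part.
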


We use this result mostly in the case where $c\leq d$:
\begin{corollary}\label{corGreen} Let $I\subset S$ be an ideal such that $h_I(d)\leq d$ and $I_{d+1}$ is base point free. Then for all $k\geq d$ we have $h_I(k+1)<h_I(k)$ or $h_I(k)=0$.
\end{corollary}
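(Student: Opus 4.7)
The plan is to show that once $h_I$ drops to $\le d$ and the ideal is base-point free from degree $d+1$ on, the Hilbert function of $I$ must strictly decrease until it hits zero. The engine is Macaulay's bound for monotonicity and Gotzmann's persistence (Theorem~\ref{thmGotz}) to rule out stabilization, combined with the fact that base-point-freeness at degree $d+1$ automatically propagates to all higher degrees.

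First I would prove by induction on $k\geq d$ that $h_I(k)\leq d$, so in particular $h_I(k)\leq k$. The base case is given. For the inductive step, Theorem~\ref{thmMac} applied to $V = I_k$ gives $h_I(k+1)\leq h_I(k)^{\langle k\rangle}$, and since the Macaulay expansion of any $c\leq k$ in base $k$ has all exponents $\epsilon_i\in\{-1,0\}$, one has $c^{\langle k\rangle} = c$. Hence $h_I(k+1)\leq h_I(k)\leq d$. Separately, from $I_{d+1}\cdot S_{m-d-1}\subseteq I_m$ and the elementary fact that the common zero locus of a linear system coincides with that of its product with all linear forms, base-point-freeness of $I_{d+1}$ forces $V(I_m) = \emptyset$ for every $m\geq d+1$.

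The main step is to exclude equality. Suppose for contradiction that $h_I(k)=h_I(k+1)=c>0$ for some $k\geq d$. Let $J = (I_k)$, so that $h_J(k) = c$ and $J_{k+1} = I_k\cdot S_1 \subseteq I_{k+1}$. Combining the inclusion $J\subseteq I$ with Macaulay gives
\[ c = h_I(k+1) \leq h_J(k+1) \leq c^{\langle k\rangle} = c, \]
so that $h_J(k+1) = c^{\langle k\rangle}$. Gotzmann's theorem then applies: the Hilbert polynomial of $J$ is the constant $c$ and $\dim V(J) = \epsilon_k = 0$, so $V(J)$ is a nonempty finite subscheme.

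To convert this into a contradiction I use the tightness of the above chain: $J_{k+1}\subseteq I_{k+1}$ together with $h_J(k+1) = h_I(k+1)$ forces $J_{k+1} = I_{k+1} = I_k\cdot S_1$, whence $V(I_{k+1}) = V(I_k\cdot S_1) = V(I_k) = V(J) \neq \emptyset$, contradicting propagated base-point-freeness. The only point that needs care is precisely this transfer from Gotzmann's conclusion about $J$ back to $I$; without the equality $J_{k+1} = I_{k+1}$, one cannot conclude that the base locus detected by Gotzmann actually lies in $V(I_{k+1})$.
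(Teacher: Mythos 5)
Your proof is correct and takes essentially the same route as the paper: Macaulay's bound with $c^{\langle k\rangle}=c$ for $c\le k$ gives non-increase, and Gotzmann's persistence applied to the ideal generated by $I_k$ rules out equality because the resulting nonempty base locus would contradict base-point-freeness of $I_{k+1}$. The only differences are organizational (the paper reduces to $k=d$ and concludes the strict inequality directly, whereas you propagate the hypotheses to all $k\ge d$ and argue by contradiction), not mathematical.
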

\begin{proof}
It suffices to prove the Corollary for $k=d$. If $h_I(d)=0$ then $h_I(k)=0$ for all $k\geq d$ and we are done. Suppose now that $h_I(d)>0$.
Let $I'$ be the ideal generated by $I_{d}$. From $h_{I'}(d)^{\langle d \rangle}=h_{I'}(d)$ it follows that $h_{I'}(d+1)\leq h_{I'}(d)$. If the inequality is strict then we are done, since $h_{I}(d+1)\leq h_{I'}(d+1)$.

Suppose now that $h_{I'}(d+1)=h_{I'}(d)$ holds.
Then Theorem~\ref{thmGotz} implies that the Hilbert polynomial of $I'$ equals $h_{I'}(d)$. Hence $I'_{d+1}$ has a base locus. Since $I'_{d+1}\subset I_{d+1}$ and $I_{d+1}$ is base point free it follows that \[h_I(d+1)<h_{I'}(d+1)\leq h_I(d).\]
\end{proof}

A final result of this type that we use is
\begin{theorem}[{Green, \cite{GreenHP}}]\label{thmGreenHyp}
Let $V\subset \C[x_0,\dots,x_n]_d$ be a linear system of codimension $c$. Let $H=\{\ell=0\}$ be a general hyperplane. Then the restriction of $V$ to $H$ has codimension at most $c_{\langle d\rangle}$ in $(\C[x_0,\dots,x_{n}]/\ell)_d$.
\end{theorem}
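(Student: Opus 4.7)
Let $S = \C[x_0,\ldots,x_n]$ and $R = S/(\ell)$, so $\dim R_d = \binom{d+n-1}{n-1}$. Multiplication by $\ell$ is injective on each graded piece of $S$, so the short exact sequence
\[
0 \to V \cap \ell S_{d-1} \to V \to V|_H \to 0
\]
together with the identification $V \cap \ell S_{d-1} \cong V' := \{f \in S_{d-1} : \ell f \in V\}$ (via multiplication by $\ell$) and the identity $\dim S_d = \dim S_{d-1} + \dim R_d$ yields $\codim_{R_d}(V|_H) = c - c'$, where $c' := \codim_{S_{d-1}}(V')$. The theorem is thus equivalent to the lower bound $c' \geq c - c_{\langle d \rangle}$.

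The plan is to reduce this inequality to an explicit monomial computation via semicontinuity. For fixed $\ell$, the dimension of $V \cap \ell S_{d-1}$ is upper semicontinuous in $V$, hence $c - c'$ is upper semicontinuous in $V$; it therefore suffices to bound $c - c'$ for the ``worst'' codimension-$c$ subspace of $S_d$. After a generic coordinate change we may take $\ell = x_n$, and via a Gr\"obner degeneration in the reverse lex order we may specialize $V$ to the unique lex-segment subspace $V_{\mathrm{lex}} \subset S_d$ of codimension $c$. Both operations preserve codimension and can only increase $c - c'$, so it suffices to verify the bound in this extremal case.

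For $V = V_{\mathrm{lex}}$ and $\ell = x_n$ the computation is explicit. The lex-segment $V_{\mathrm{lex}}$ has a monomial basis prescribed by the Macaulay expansion $c = \sum_{i=1}^d \binom{i+\epsilon_i}{i}$, and the associated $V'$ is also monomial. Reading off the bases of $S_d/V_{\mathrm{lex}}$ and $S_{d-1}/V'$ block by block from the expansion gives $c' = \sum_{i=1}^d \binom{i+\epsilon_i-1}{i-1}$, and applying Pascal's rule $\binom{i+\epsilon_i}{i} = \binom{i+\epsilon_i-1}{i} + \binom{i+\epsilon_i-1}{i-1}$ termwise then yields
\[
c - c' \;=\; \sum_{i=1}^d \binom{i+\epsilon_i-1}{i} \;=\; c_{\langle d \rangle}.
\]
The main obstacle is the rigorous setup of the Gr\"obner deformation and its compatibility with hyperplane restriction: one needs the limit subspace to be Borel-fixed so that specialization of $\ell$ to the last coordinate is harmless, and one needs the specific monomial basis arising from the Macaulay expansion to match the lex-segment structure. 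Once these are in place the combinatorial identity above is routine.
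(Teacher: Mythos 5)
The paper gives no proof of this statement (it is quoted from Green's paper \cite{GreenHP}), so the only question is whether your argument stands on its own, and it has a genuine gap at the degeneration step. A Gr\"obner degeneration of $V$ in the reverse lexicographic order (after a generic change of coordinates) specializes $V$ to its generic initial subspace $\mathrm{gin}(V)_d$, which is Borel-fixed but in general is \emph{not} the lex-segment subspace of codimension $c$. For instance, in $\C[x_0,x_1,x_2]_2$ the space $V=\langle x_0^2,x_0x_1,x_1^2\rangle$ has $\mathrm{gin}(V)_2=\langle x_0^2,x_0x_1,x_1^2\rangle$ itself, not the lex segment $\langle x_0^2,x_0x_1,x_0x_2\rangle$; monomial subspaces are torus-fixed points of the Grassmannian, and a Borel-fixed one does not degenerate any further, so there is no flat specialization from an arbitrary $V$ to $V_{\mathrm{lex}}$. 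Consequently your semicontinuity argument only reduces the theorem to the class of Borel-fixed monomial subspaces, and the assertion that among these (equivalently, among all $V$) the count of complement monomials prime to $x_n$ is maximized by the lex segment, with maximum value $c_{\langle d\rangle}$, is exactly the combinatorial heart of Green's theorem; it requires its own (inductive) proof and is entirely missing from your write-up. Your closing computation that the lex segment with $\ell=x_n$ achieves exactly $c_{\langle d\rangle}$ is correct, but it establishes sharpness, not the bound.

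A second, related problem is the phrase ``it suffices to bound $c-c'$ for the worst codimension-$c$ subspace of $S_d$'' with $\ell$ fixed. For a fixed hyperplane this is false: taking $V\subset S_d$ to contain $\ell S_{d-1}$-type elements (e.g.\ $V=\langle x_1^2,x_0x_1\rangle\subset\C[x_0,x_1]_2$ with $\ell=x_1$) gives restriction codimension $1>c_{\langle 2\rangle}=0$. The genericity of $\ell$ relative to $V$ cannot be discarded; it is retained only through the precise mechanism you gesture at (generic coordinates plus the special compatibility of revlex initial spaces with restriction to the smallest variable, as in Bayer--Stillman/Green), and that mechanism delivers a Borel-fixed space, not $V_{\mathrm{lex}}$. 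To repair the argument you would need to add the Borel-fixed case of the inequality (as in Green's original induction or the treatment in Herzog--Hibi), after which your identity for the lex segment serves only to show the bound is attained.
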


\section{Nodal complete intersections}\label{secNodCI}
\begin{notation}
Let $n=2k+1$ be a positive odd integer,  $c$ be a positive integer, and $(w_0,\dots,w_{n+c})$ a sequence of positive integers. Let us denote with $\Ps:= \Ps(w_0,\dots, w_{n+c})$ the associated weighted projective space. Let $S=\C[x_0,\dots,x_{n+c}]$ be the graded polynomial ring such that $\deg x_i=w_i$.
\end{notation}

\begin{definition}
We say that a codimension $c$ complete intersection $X\subset \Ps$ is \emph{a nodal complete intersection of codimension $c$}, if
\begin{enumerate}
\item for all $p\in \Ps_{\sing}\cap X$ we have that $X$ is quasi-smooth at $p$ and
\item for all $p\in X_{\sing} \setminus(\Ps_{\sing}\cap X)$ we have that $(X,p)$ is an $A_1$-singularity.
\end{enumerate}
Let $\Sigma$ denote the set $X_{\sing} \setminus(\Ps_{\sing}\cap X)$.
\end{definition}

\begin{proposition}\label{prpLef} Let $X\subset \Ps$ be a nodal complete intersection of codimension $c$ then for $i<n$ 
\[ \dim H^i(X)=\dim H^i(\Ps).\]
Moreover, for $i<n-1$ we have
\[ \dim H^i(X)=\dim H^{2n-i}(X).\]
\end{proposition}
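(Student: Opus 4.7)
The plan is to deduce both assertions from the corresponding classical results for a smooth quasi-smooth complete intersection in $\Ps$, via a one-parameter smoothing of $X$. Explicitly, if $X = V(f_1,\dots,f_c)$, I would consider $\mathcal X = V(f_1 + t g_1, \dots, f_c + t g_c) \subset \Ps \times \Delta$ for generic $g_i$ of the appropriate weighted degrees, yielding a flat family $\pi\colon \mathcal X \to \Delta$ with special fibre $X_0 = X$ and smooth quasi-smooth generic fibre $X_t$. Bertini on the smooth part of $\Ps$ delivers quasi-smoothness of $X_t$, while the local smoothing $\sum x_i^2 = t$ at each node of $X$ shows that $\mathcal X$ is smooth along $\Sigma$.

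For the generic fibre $X_t$, the weighted Lefschetz hyperplane theorem (see e.g.\ Dimca's book on singularities and topology of hypersurfaces) yields $\dim H^i(X_t) = \dim H^i(\Ps)$ for $i < n$, and Poincar\'e duality --- valid because a quasi-smooth complete intersection is a $\Q$-homology manifold --- gives $\dim H^i(X_t) = \dim H^{2n-i}(X_t)$ for all $i$.

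To transfer these statements to $X$, I would invoke the vanishing cycle complex $\phi_\pi \Q_{\mathcal X}$. For an $A_1$ degeneration with smooth total space the Milnor fibre is homotopy equivalent to $S^n$, so $\phi_\pi \Q_{\mathcal X}$ is a skyscraper on $\Sigma$ concentrated in cohomological degree $n$. The resulting long exact sequence
\[ \dots \to H^{i-1}(\phi_\pi \Q) \to H^i(X) \to H^i(X_t) \to H^i(\phi_\pi \Q) \to \dots \]
therefore gives $H^i(X) \cong H^i(X_t)$ for every $i \notin \{n, n+1\}$. Combining with the previous step: for $i < n$ we obtain $\dim H^i(X) = \dim H^i(X_t) = \dim H^i(\Ps)$, proving (i); and for $i < n-1$, both $i$ and $2n-i$ lie outside $\{n, n+1\}$, so $\dim H^i(X) = \dim H^i(X_t) = \dim H^{2n-i}(X_t) = \dim H^{2n-i}(X)$, proving (ii).

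The main obstacle is the singular ambient: because $\Ps$ itself is singular along $\Ps_{\sing}$, the vanishing-cycles computation above cannot be applied globally off the shelf. However, $X$ meets $\Ps_{\sing}$ only at quasi-smooth points (which deform to quasi-smooth points in $X_t$ and contribute no extra cohomology), while $\Sigma$ lies in the smooth locus of $\Ps$. Localising to a neighbourhood of $\Sigma$, where both the total space $\mathcal X$ and the ambient are smooth, the standard ODP vanishing-cycles picture applies unchanged and the argument goes through.
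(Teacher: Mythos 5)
Your strategy --- smooth the nodes in a one-parameter family, prove Lefschetz and Poincar\'e duality on the quasi-smooth nearby fibre, and transfer back through the specialization sequence, the vanishing cycles being a skyscraper in degree $n$ at the nodes --- is genuinely different from the paper's proof, which never deforms $X$: there the first equality is the Lefschetz theorem applied directly to the singular $X$, and the duality statement comes from blowing up the nodes, observing that the blow-up is a $\Q$-homology manifold, and running the Mayer--Vietoris sequence of the discriminant square together with Poincar\'e duality on the blow-up. Your degree bookkeeping in the specialization sequence is correct ($i$ and $2n-i$ both avoid $\{n,n+1\}$ when $i<n-1$), and in ordinary projective space, or whenever the linear systems involved are positive enough, the argument would go through. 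Note also that for the first equality the smoothing is unnecessary: the Lefschetz-type theorem cited in the paper holds for the singular $X$ itself.

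In the generality in which the proposition is stated (arbitrary weights $w_0,\dots,w_{n+c}$) there is, however, a genuine gap at the first step: the existence of a smoothing with the properties you use. ``Bertini on the smooth part of $\Ps$'' gives neither quasi-smoothness of $X_t$ nor smoothness of the total space $V(f_1+tg_1,\dots,f_c+tg_c)$ along $\Sigma$, because in weighted projective space the systems $|\cO_{\Ps}(d_i)|$ can have base loci meeting the smooth locus of $\Ps$: for instance every form of degree $7$ on $\Ps(3,5,1,1,1)$ vanishes on the curve $x_2=x_3=x_4=0$, which lies in the smooth locus, and nothing in the hypotheses prevents nodes of $X$ from sitting on such a base locus. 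At such a node all admissible $g_i$ vanish, the total space is singular there, and the local model $\sum x_i^2=t$ is not available. One can partially repair this (by versality of the $A_1$ germ the vanishing-cycle stalk is still that of the Milnor fibre $S^n$ as soon as the generic member of the family smooths the node, even if the total space is singular), but then one must prove that the node is smoothable at all inside the multidegree family, which can fail when the graded pieces $S_{d_i}$ are small; the hypotheses of the proposition do not exclude this. Similarly, the vanishing of $\phi_\pi\Q$ along $X\cap\Ps_{\sing}$ requires an equivariant local-triviality argument (the family there is a finite quotient of a submersion), which you assert rather than prove. The paper's blow-up argument sidesteps all of these existence questions, which is exactly what it buys over your route.
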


\begin{proof}
The first equality follows from the Lefschetz hyperplane theorem \cite[Theorem 4.2.6]{Dim}. To prove the second equality we consider a partial resolution of singularities of $X$:

Since $X$ is quasismooth outside $\Sigma$ we have that for all $i\neq 2n$ and for all $p\in X\setminus \Sigma$ the group $H_p^i(X)$ vanishes.
Let $\tilde{X}$ be the blow up  of $X$ along $\Sigma$. Then $\tilde{X}$ is smooth along the exceptional divisor. In particular, for all $p\in \tilde{X}$ we have that $H^i_p(\tilde{X})=0$ if $i\neq 2n$. This implies that $\tilde{X}$ is $\Q$-homology manifold and satisfies Poincar\'e duality. 

Consider the Mayer-Vietoris sequence associated with the discriminant square \cite[Corollary-Definition 5.37]{PSbook}
\[ \dots \to H^i(X)\to H^{i}(\tilde{X}) \oplus H^i(\Sigma)\to H^i(E)\to H^{i+1}(X) \to \dots\]
This is an exact sequence of mixed Hodge structures.

The exceptional divisor $E$ is the disjoint union of $\#\Sigma$ smooth quadrics in $\Ps^n$. Thus its cohomology can be nonzero only in even degree between $0$ and $2n-2$. 
Let $E_j$ and $E_k$ be distinct irreducible components of $E$. For $i$ even between 2 and $2n-2$ consider $c_1(E_j)^{i/2}\in H^i(\tilde{X})$. Then $c_1(E_j)^{i/2}$  is mapped to zero in $H^i(E_k)$ and to a nonzero element of $H^i(E_j)$. If $i\neq n-1$ then $H^i(E_j)$ is one-dimensional and therefore the map $H^i(X)\to H^i(E)$ is surjective for even $i$, different from $0$ and $n-1$. From this it follows that the above long exact sequence splits in the following exact sequences:
\begin{itemize}
\item $ 0 \to H^0(X)\to H^0(\tilde{X}) \oplus H^0(\Sigma)\to H^0(E)\to 0$;
\item $0=H^i(X)\cong H^i(\tilde{X})$ for $i$ odd, different from $n$;
\item $ 0\to \Q=H^i(X)\to H^i(\tilde{X})\to H^i(E)\to 0$  for even $i$ different from  $0$ and $ n-1$;
\item$ 0 \to H^{n-1}(X)\to H^{n-1}(\tilde{X}) \to H^{n-1}(E) \to H^n(X)\to H^{n}(\tilde{X})$.
\end{itemize}

Since $E$ is a disjoint union of smooth quadrics it follows that   $h^i(E)=\#\Sigma=h^{2n-i}(E)$ for $i\neq 0, n-1,n+1, 2n$. From Poincar\'e duality it follows that $h^i(\tilde{X})=h^{2n-i}(\tilde{X})$ for all $i$. Combining this yields that that $h^i(X)=h^{2n-i}(X)$ for $i\neq 0, n-1,n+1,2n$.

To finish the proof, note that we showed that $h^0(X)=h^0(\tilde{X})=1$ and $h^{2n}(X)=h^{2n}(\tilde{X})=1$.
\end{proof}

The proof of the  above result suggests that   $h^{n+1}(X,\Q)$ may be strictly larger than $h^{n-1}(X,\Q)$.
\begin{definition}
The \emph{defect} $\delta$ of $X$ equals $h^{n+1}(X,\Q)-h^{n-1}(X,\Q)$.
\end{definition}

\begin{remark}
If $n=3$ then $\delta$ equals the rank of the group $\mathrm{CH}^1(X)/\Pic(X)$. Since this group is free, $\delta$ measures the failure of Weil divisors to be Cartier.
\end{remark}

\begin{lemma}\label{lemDefLoc} Let $X$ be a nodal complete intersection. Let $\cD$ be the equisingular deformation space of $X$. Then the locus 
\[ \{ X'\in \cD  \mid \delta(X')=\delta(X)\}\]
is a Zariski open subset of $\cD$.
\end{lemma}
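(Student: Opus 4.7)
The plan is to show that $\delta$ is locally constant on $\cD$; this immediately gives the result, since $\{X' \in \cD : \delta(X') = \delta(X)\}$ is then a union of connected components of $\cD$ and hence both open and closed.

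First I would replace the family $\mathcal{X} \to \cD$ by its simultaneous resolution. Because the deformation is equisingular, the relative singular locus $\Sigma \subset \mathcal{X}$ is \'etale over $\cD$, so blowing it up produces a smooth proper family $\tilde{\pi} \colon \tilde{\mathcal{X}} \to \cD$ whose exceptional divisor $\mathcal{E}$ is a locally trivial bundle of disjoint unions of smooth $(n-1)$-dimensional quadrics. By Ehresmann's theorem applied to $\tilde{\pi}$ and to $\pi_{\mathcal{E}} \colon \mathcal{E} \to \cD$, the higher direct images $R^i \tilde{\pi}_* \mathbf{Q}$ and $R^i (\pi_{\mathcal{E}})_* \mathbf{Q}$ are local systems on $\cD$, and the fibrewise restriction maps assemble into morphisms of local systems $\phi_i \colon R^i \tilde{\pi}_* \mathbf{Q} \to R^i (\pi_{\mathcal{E}})_* \mathbf{Q}$.

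The crucial observation is that a morphism of local systems has locally constant rank: trivializing both sides over any contractible analytic open subset, it reduces to a single fixed linear map. Combining this with the Mayer--Vietoris sequence from the proof of Proposition \ref{prpLef}, now read as an exact sequence of local systems on $\cD$, one obtains
\[ h^{n+1}(X_t) \;=\; h^{n+1}(\tilde{X}_t) - \rank \phi_{n+1,t}, \]
a locally constant function of $t$, while $h^{n-1}(X_t) = h^{n-1}(\Ps)$ is itself constant by Proposition \ref{prpLef}. Hence $\delta(X_t) = h^{n+1}(X_t) - h^{n-1}(X_t)$ is locally constant on $\cD$, as desired.

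The only nontrivial input is the existence of the simultaneous resolution, i.e., that in an equisingular family of nodes the relative singular locus $\Sigma \to \cD$ is \'etale and blowing it up yields a family smooth over $\cD$. This is standard for nodes, since an ordinary double point is resolved by a single blow-up with smooth quadric exceptional divisor, and it is this feature that makes the local-system argument available.
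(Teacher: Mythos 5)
Your proof is correct and uses the same skeleton as the paper's — simultaneous blow-up of the nodes along the equisingular family and the Mayer--Vietoris sequence of the discriminant square, together with $h^{n-1}(X_t)=h^{n-1}(\Ps)$ from Proposition~\ref{prpLef} — but the finishing step is genuinely different. The paper computes the rank of the restriction map $H^{n+1}(\tilde{X}_t)\to H^{n+1}(E_t)$ exactly: using $H^{n+2}(X_t)=0$ (equivalently, the surjectivity coming from the classes $c_1(E_j)^{(n+1)/2}$ in the proof of Proposition~\ref{prpLef}) and the purity of weight $n+1$ of $H^{n+1}(X_t)$, it obtains the closed formula $h^{n+1}(X_t)=h^{n+1}(\tilde{X}_t)-s\cdot h^{n+1}(E)$, whose right-hand side is visibly independent of $t$. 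You instead only need that this rank is \emph{locally constant}, which you extract from Ehresmann's theorem plus the observation that a morphism of local systems has locally constant rank; combined with $H^{n}(E_t)=0$ (so the fibrewise sequence starts $0\to H^{n+1}(X_t)\to H^{n+1}(\tilde{X}_t)\to H^{n+1}(E_t)$, the terms $H^{n+1}(\Sigma_t)$ vanishing), this gives local constancy of $h^{n+1}(X_t)$ without any Hodge-theoretic input. Your route is softer and bypasses both the vanishing of $H^{n+2}(X_t)$ and the weight-purity argument, at the price of not recovering the explicit formula (i.e., the surjectivity onto the cohomology of the exceptional quadrics), which the paper gets for free. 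Both arguments rest on the same standard but unexpanded inputs: that in an equisingular family the nodes can be blown up simultaneously (after shrinking the base) to yield a smooth proper family, and that local constancy in the classical topology gives constancy on Zariski connected components, hence Zariski openness of the locus in question.
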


\begin{proof} Let $(X_t)_{t\in U}$ be an equisingular deformation of $X$. Possibly after shrinking $U$,   we have that $X_t$ has the same number of nodes for all $t\in U$. Blowing up these nodes simultaneously yields a flat family  $\tilde{X_t}$ of smooth projective varieties. Hence $H^{n+1}(\tilde{X_t})$ is independent of $t$. Let $E$ be the exceptional divisor of the blow-up of a node, and let $s$ the number of nodes of $X$. As in the proof of Proposition~\ref{prpLef}  we can consider the Mayer-Vietoris sequence associated with the discriminant square. This time we take also into account the Hodge structures.  We obtain the following exact sequence
 \[ 0\to \Gr^W_{n+1} H^{n+1}(X_t)\to H^{n+1}(\tilde{X_t}) \to H^{n+1}(E)^{\oplus s} \to H^{n+2}(X_t). \]
Since $X_t$ is a nodal hypersurface and $n$ is odd we have that $H^{n+2}(X_t)=0$. Since all singularities of $X_t$ are nodes or induced by the ambient space it follows that  $H^{n+1}(X_t)$ has a Hodge structure of pure weight $n+1$. This yields
\[ h^{n+1}(X_t)=h^{n+1}(\tilde{X_t})-s\cdot  h^{n+1}(E).\]
Both terms on the right hand side are independent of $t$, hence so is $h^{n+1}(X_t)$. By Proposition~\ref{prpLef} we have $h^{n-1}(X_t)=1$ for all $t$ and hence  $\delta(X_t)=h^{n+1}(X_t)-h^{n-1}(X_t)=h^{n+1}(X_t)-1=h^{n+1}(X)-1=\delta(X)$.
\end{proof}

One can express $\delta$ in terms of the Hilbert function of the ideal of the nodes. 
Suppose now that $c=1$, i.e., $X$ is a hypersurface. 
Set $m:=\frac{n+1}{2}$.

The following result is \cite[Proposition 3.2]{DimBet}: 

\begin{proposition}\label{prpHS} Let $X\subset \Ps$ be a nodal hypersurface. Let $\Sigma\subset \Ps$ be the locus of the nodes of $X$. Then
\[ \delta(X)=\#\Sigma - \dim (S/I(\Sigma))_{md-\sum w_i}.\]
\end{proposition}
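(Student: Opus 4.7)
The plan is to identify the defect $\delta(X)$ with the failure of the nodes $\Sigma$ to impose independent conditions on polynomials of degree $md - \sum w_i$, following the residue-theoretic approach of Griffiths and Dimca.

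First I would locate the defect in the mixed Hodge structure. The Mayer--Vietoris sequence used in the proof of Proposition~\ref{prpLef}, refined with the weight filtration exactly as in the proof of Lemma~\ref{lemDefLoc}, shows that $H^{n+1}(X)$ is pure of weight $n+1$ and that
\[ h^{n+1}(X) = h^{n+1}(\tilde X) - \#\Sigma, \]
where $\tilde X \to X$ is the blow-up along $\Sigma$. Since each exceptional quadric contributes only Hodge--Tate classes in each even degree, the defect is concentrated in the single piece $H^{m,m}(X)$, and is matched on the other side by $H^{m-1,m-1}(X) \subset H^{n-1}(X)$ coming from $\Ps$.

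Next I would invoke Griffiths' residue isomorphism for a quasi-smooth hypersurface $Y \subset \Ps$ of the same degree as $X$:
\[ H^{n-q,\,q}_{\prim}(Y) \cong (S/J_g)_{(q+1)d - \sum w_i}, \]
where $J_g$ is the Jacobian ideal of the defining equation $g$ of $Y$. Specializing to $q = m-1$ identifies the relevant Hodge piece with the graded component in degree $k_0 := md - \sum w_i$. Degenerating $Y$ to $X$ and tracking vanishing cycles at the nodes yields a comparison between $(S/J_g)_{k_0}$ and its limit; since nodes are the only singularities of $X$, the Jacobian ideal $J_f$ of the limit equation has $I(\Sigma)$ as its saturation. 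A diagram chase comparing the graded pieces at $k_0$ of $J_f$, $I(\Sigma)$ and $S$ then gives
\[ \delta(X) = \#\Sigma - \dim (S/I(\Sigma))_{k_0}, \]
the second term counting the conditions that $\Sigma$ actually imposes on $S_{k_0}$.

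The main obstacle is the Hodge-theoretic bookkeeping in the second step: one must verify that the weighted residue formula picks up precisely the graded degree $md - \sum w_i$ (hence the role of $\sum w_i$ rather than an unweighted analogue), and that the defect is concentrated in the single Hodge piece $H^{m,m}(X)$, so that this single graded degree captures the entire discrepancy. In particular the quasi-smoothness hypothesis is used to rule out contributions from $\Ps_{\sing} \cap X$, so that $I(\Sigma)$ really is the saturation of $J_f$. These technicalities are carried out in detail in \cite[Proposition 3.2]{DimBet}.
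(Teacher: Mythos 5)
The paper does not prove this proposition at all: it is quoted directly as \cite[Proposition 3.2]{DimBet}, so your final deferral to that reference is in line with what the paper itself does. The problem is that the sketch you present as the proof is not the argument of that reference, and its central step is a genuine gap rather than a ``technicality''. The actual proof (which the paper reproduces in adapted form when it proves the analogous defect formula for complete intersections via the Cayley trick) involves no degeneration to a quasi-smooth hypersurface: one identifies $\delta(X)$ with the corank of the natural map $H^{n}(X\setminus\Sigma)\to H^{n+1}_{\Sigma}(X)$, notes that the target is $\bigoplus_{p\in\Sigma}\C$ and pure of Hodge type $(m,m)$ because all singularities are nodes and $n$ is odd, uses that the residue map $H^{n+1}(\Ps\setminus X)\to H^{n}(X\setminus\Sigma)$ is a surjective morphism of Hodge structures of bidegree $(-1,-1)$, and then invokes the key input from \cite{DelDim}: the pole-order filtration is contained in the Hodge filtration, giving a surjection $H^{0}(\cO_{\Ps}(md-\sum w_i))=H^{0}(K_{\Ps}(mX))\to F^{m+1}H^{n+1}(\Ps\setminus X)$. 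The composite map $S_{md-\sum w_i}\to\bigoplus_{p\in\Sigma}\C$ is evaluation at the nodes, and its corank is exactly $\#\Sigma-\dim\bigl(S/I(\Sigma)\bigr)_{md-\sum w_i}$.

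Measured against this, your middle step does not work as stated. ``Degenerating $Y$ to $X$ and tracking vanishing cycles'', followed by ``a diagram chase comparing the graded pieces at $k_0$ of $J_f$, $I(\Sigma)$ and $S$'', is an assertion of the result rather than a derivation: Griffiths' residue isomorphism for the smooth member does not pass naively to the nodal limit (the limit mixed Hodge structure involves the vanishing cohomology and the monodromy, and $(S/J_f)_{k_0}$ acquires a Hodge-theoretic meaning for the singular $X$ only through the pole-order comparison of \cite{DelDim}, which is precisely the point at issue). Likewise, the fact that $I(\Sigma)$ is the saturation of $J_f$ gives no control in the single fixed degree $k_0=md-\sum w_i$; the two ideals differ exactly in such degrees when the defect is nonzero, so no formal comparison of $J_f$, $I(\Sigma)$ and $S$ can produce the formula without the surjectivity statement above. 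The purity claims in your first step (that the discrepancy sits in $H^{m,m}(X)$ and is ``matched'' by $H^{m-1,m-1}(X)$) are also left unjustified and are not what the proof uses; what is used is the purity of the local cohomology $H^{n+1}_{\Sigma}(X)$ at the nodes, together with quasi-smoothness along $\Ps_{\sing}$ to exclude contributions from there. Since you ultimately cite \cite[Proposition 3.2]{DimBet} for exactly these points, the honest summary is that your proposal, like the paper, rests on that reference, while the surrounding sketch would not stand on its own.
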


\section{Hypersurfaces with defect}\label{sectHSDS}
We will use the results from the previous section to reprove the following result by Cheltsov on the minimal number of nodes to have defect:
\begin{theorem}[{Cheltsov, \cite{ChelFac, ChelPlane}}]\label{thmHS} Let $X\subset \Ps^4$ be a nodal hypersurface of degree at least 3. Assume that $h^4(X)\geq 2$, i.e., that $X$ has defect. Then $X$ has at least $(d-1)^2$ nodes. If $X$ has precisely $(d-1)^2$ nodes then $X$ contains a plane.
 \end{theorem}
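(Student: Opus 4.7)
The strategy is to analyze the Hilbert function of the ideal $I\subset S=\C[x_0,\ldots,x_4]$ of the node set $\Sigma$ after restriction to a generic hyperplane, and to bound $\#\Sigma$ by a telescoping sum. Fix a general linear form $\ell$, so $\Sigma\cap V(\ell)=\emptyset$ and $\ell$ is a nonzerodivisor on $S/I$; set $I_H:=I+(\ell)$. The short exact sequence $0\to (S/I)(-1)\xrightarrow{\ell}S/I\to S/I_H\to 0$ gives $h_{I_H}(k)=h_I(k)-h_I(k-1)$, so $h_I(2d-4)=\sum_{k=0}^{2d-4}h_{I_H}(k)$. Proposition~\ref{prpHS} specialized to $n=3$, $m=2$, $\sum w_i=5$ reads $\delta(X)=\#\Sigma-h_I(2d-5)$; combined with the fact that the Hilbert function of a $0$-dimensional scheme is strictly increasing until it stabilizes, the hypothesis $\delta\geq 1$ forces $h_{I_H}(2d-4)\geq 1$. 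The goal is to show $\sum_{k=0}^{2d-4}h_{I_H}(k)\geq (d-1)^2$, since $\#\Sigma=p_I\geq h_I(2d-4)$.

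Nodality of $X$ implies the partial derivatives $\partial_i f\in I_{d-1}$ have common zero set exactly $\Sigma$, so by genericity of $H$ the linear system $(I_H)_{d-1}$ (and hence $(I_H)_d$) is base point free on $H\cong\Ps^3$. Corollary~\ref{corGreen} then applies at any $d_0\geq d-1$ with $h_{I_H}(d_0)\leq d_0$, forcing strict decrease of $h_{I_H}$ from $d_0$ onward. In the principal case $h_{I_H}(d-1)\leq d-1$, combined with $h_{I_H}(2d-4)\geq 1$ this yields $h_{I_H}(k)\geq 2d-3-k$ for $d-1\leq k\leq 2d-4$; the opposite case $h_{I_H}(d-1)\geq d$ is trivially stronger.

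For the lower-degree bounds $h_{I_H}(k)\geq k+1$ with $0\leq k\leq d-2$, I iterate Corollary~\ref{corMacDown} (Macaulay downward) starting from $h_{I_H}(d-1)$. A direct computation of Macaulay expansions shows that $h_{I_H}(d-2)\geq d-1$ in every sub-case \emph{except} $h_{I_H}(d-1)=d-2$, where pure Macaulay only yields $h_{I_H}(d-2)\geq d-2$. In that sub-case, however, equality $h_{I_H}(d-2)=d-2$ would trigger Corollary~\ref{corGreen} at $d_0=d-2$ (the base-point-free hypothesis being satisfied by $(I_H)_{d-1}$), forcing $h_{I_H}(d-1)<d-2$, a contradiction. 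Hence $h_{I_H}(d-2)\geq d-1$ uniformly, and successive applications of Corollary~\ref{corMacDown} propagate the bound $h_{I_H}(k)\geq k+1$ down to $k=0$. Summing,
\[\#\Sigma \geq \sum_{k=0}^{d-2}(k+1)+\sum_{k=d-1}^{2d-4}(2d-3-k) = \binom{d}{2}+\binom{d-1}{2}=(d-1)^2.\]

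For the equality case, every inequality above must be tight; in particular $h_{I_H}(1)=2$, so $h_I(1)=3$ and $\dim I_1\geq 2$, forcing $\Sigma$ to lie on a plane $P=V(\ell_1,\ell_2)\subset\Ps^4$. If $P\not\subset X$ then $P\cdot X$ is a $1$-cycle of degree $d$ on $P$, while each node $p\in\Sigma\subset P$ contributes local intersection multiplicity at least $e(\cO_{X,p})=2$, for a total of at least $2(d-1)^2>d$ whenever $d\geq 3$: a contradiction. Hence $P\subset X$. The main obstacle is precisely the sub-case bookkeeping in the Macaulay-downward iteration: without the cross-application of Corollary~\ref{corGreen} at $d_0=d-2$, Macaulay alone loses the one unit needed to reach the sharp bound $(d-1)^2$.
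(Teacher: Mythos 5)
Your lower-bound half is correct, and it is essentially the paper's argument in a mildly different dress: the paper also deduces $h_{I_H}(2d-4)>0$ from Proposition~\ref{prpHS}, uses base-point-freeness of the restricted Jacobian system in degree $d-1$ together with Corollary~\ref{corGreen} to force $h\geq 2d-3-k$ on the range $d-2\leq k\leq 2d-4$ (your ``cross-application at $d_0=d-2$'' is exactly the paper's inclusion of $k=d-2$ in this range), but then gets the low-degree bounds $h(k)\geq k+1$ by passing to an Artinian Gorenstein quotient $I'\supseteq I_H$ of socle degree $2d-4$ and invoking the duality $h_{I'}(k)=h_{I'}(2d-4-k)$, rather than by your downward Macaulay iteration via Corollary~\ref{corMacDown}. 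Both routes give $\sum_{k=0}^{2d-4}h_{I_H}(k)\geq(d-1)^2$, so up to this point your proof stands (your dismissal of the case $h_{I_H}(d-1)\geq d$ as ``trivially stronger'' is too quick for $k>d-1$, but the pointwise application of Corollary~\ref{corGreen} that you already state covers it).

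The genuine gap is the equality case. Since $P$ is a $2$-plane and $X$ a hypersurface in $\Ps^4$, the intersection $P\cap X$ is one-dimensional, and the degree $d$ of the $1$-cycle $P\cdot X$ is not bounded below by a sum of local multiplicities at points lying on it: points on a curve do not contribute additively to its degree, so the comparison ``$2(d-1)^2>d$'' is not a contradiction of anything (such a refined Bezout count would be available only if $P\cap X$ were zero-dimensional). The salvageable reading of your idea is that $C=X\cap P$ is a plane curve of degree $d$ singular at all $(d-1)^2$ points of $\Sigma$, exceeding the bound $\binom{d}{2}$ on singular points of a \emph{reduced} plane curve; but nothing you have established excludes $f|_P$ having a multiple factor, in which case $C_{\sing}$ is a curve and no numerical contradiction arises. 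Excluding this degenerate trace requires the finer consequences of equality that the paper actually extracts: $h_I$ coincides with the Hilbert function of a complete intersection of type $(1,1,d-1,d-1)$; since $I_{d-1}$ has finite base locus, the two linear and two degree-$(d-1)$ generators form a regular sequence, so $I$ \emph{is} that complete intersection; and because these generators give local coordinates at each node while $f$ vanishes to order two there, $f$ lies in the ideal generated by the products $f_if_j$, whence the degree count forces $f\in(\ell_1,\ell_2)$, i.e.\ $P\subset X$. Some argument of this strength (or another way to rule out a non-reduced $X\cap P$) is needed; the two-line intersection count does not supply it.
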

\begin{proof}
Without loss of generality we may assume that $X_H=X\cap V(x_4)$ is smooth. In particular,  none of the nodes of $X$ is contained in $V(x_4)$. Set $R=\C[x_0,x_1,x_2,x_3,x_4]$ and $S=\C[x_0,x_1,x_2,x_3]$. Let $I\subset R$ be the ideal of the nodes of $X$. Since $X$ has defect it follows from Proposition~\ref{prpHS} that $h_I(2d-5)<p_I(2d-5)$.

Let $I_H\subset S$ be the ideal obtained by substituting $x_4=0$ in $I$. 
From the fact that none of the nodes of $X$ is contained in $V(x_4)$ it follows that the following sequence is exact:
\[ 0 \to (R/I)_{k-1}\stackrel{x_4}{\to} (R/I)_k \to (S/I_H)_k \to 0.\]
If $h_{I_H}(2d-4)$ vanishes then we have $h_{I_H}(k)=0$ for $k\geq 2d-4$. In particular, $h_{I}(k)=h_I(k+1)$ for $k\geq 2d-5$. Since we know that $h_I(2d-5)<p_I(2d-5)$ this cannot be the case and hence $h_{I_H}(2d-4)>0$ holds.
 Fix now a codimension one subspace $W$ of $S_{2d-4}$ containing $(I_H)_{2d-4}$. Define $I'\subset S$ by  $I'_e=\{ g \mid g S_{2d-4-e} \subset W\}$ if $e\leq 2d-4$ and $I'_e=S_e$ for $e\geq 2d-3$.
Then $I'$ is an ideal, containing $I_H$. Moreover $S/I'$ is a Gorenstein ring with socle degree $2d-4$. In particular, $h_{I'}(k)=h_{I'}(2d-4-k)$.

Let $f$ be a defining polynomial for $X$. Since $(I_H)_{d-1}$ contains the partial derivative $\frac{\partial{f}}{\partial{x_i}}(x_0,x_1,x_2,x_3,0)$ for $i=0,\dots,3$ and $X\cap V(x_4)$ is smooth, we have that $I'_{d-1}$ is base point free. 

If $h_{I'}(k)<2d-3-k$ for some $k$ with $d-2\leq k \leq 2d-4$. Then from Corollary~\ref{corGreen} it would follow that $h_{I'}(2d-4)=0$, contradicting the fact that $h_{I'}(2d-4)=1$. Hence $h_{I'}(k)\geq 2d-3-k$ for every integer $k$ such that $d-2\leq k \leq 2d-4$.
Combining this information we obtain
\[ p_I=p_I(2d-4)\geq h_I(2d-4)=\sum_{i=0}^{2d-4} h_{I_H}(i)\geq\sum_{i=0}^{2d-4} h_{I'}(i) \geq (d-1)^2.\]

If $p_I=(d-1)^2$ then we have that $h_I$ equals the Hilbert function of a complete intersection of degree $(1,1,d-1,d-1)$. Since $I$ contains the partials of $f$ it follows that the linear system $|I_{d-1}|$  has finitely many base points. 

Now $I$ has two generators in degree 1 and two further generators in degree $d-1$. In particular these four generators defines a codimension four scheme and hence these four generators form a regular sequence. The Hilbert function of the ideal generated by these four forms equals the Hilbert function of $I$. Hence $I$ is a complete intersection ideal, generated by $f_1,f_2,f_3,f_4$, with $\deg(f_1)=\deg(f_2)=1, \deg(f_3)=\deg(f_4)=d-1$.

At each node of $X$ the polynomials $f_1,f_2,f_3,f_4$ induce a local system of coordinates. Since at each singular point of $X$ the polynomial $f$ vanishes up to order two it follows that $f$ is an element of the ideal generated by the $f_if_j$ with $i\leq j$. These forms have degree at most $d$ if and only if $i\leq 2$. In particular, $f$ is in the ideal generated by $f_1$ and $f_2$ and therefore contains the plane $f_1=f_2=0$.
\end{proof}

\begin{remark}\label{rmkLargestDim}
 The proof reveals also the following interesting observation. Suppose $I$ is the ideal of the nodes of a threefold of degree $d$ with defect. Then 
 \[ h_I(d)\geq \sum_{k=0}^d h_{I'}(k)\geq \frac{1}{2} (d^2+3d-10).\]
 Recall that $I_d$ is the tangent space to the equisingular deformation space of $X$ \cite{GrK}.  Hence it follows that any family of degree $d$ nodal hypersurfaces with defect has codimension at least $\frac{1}{2}(d^2+3d-10)$ in $S_d$. Moreover, if equality holds then the above proof shows that $X$ contains a plane.
 
 Consider now hypersurfaces containing a fixed plane $P$. They form a family of codimension $\frac{1}{2}(d+1)(d+2)$. Since the Grassmannian of planes in $\Ps^4$ has codimension 6 it follows that the total family has codimension $\frac{1}{2}(d^2+3d-10)$. A general element of this family is of the form $\ell_1f_1+\ell_2f_2$ with $\deg(\ell_i)=1$ and $\deg(f_i)=d-1$. In particular, a general element is a nodal hypersurface.
Hence the largest-dimensional family of nodal hypersurfaces with defect consists of hypersurfaces containing a plane.
\end{remark}

The bound we obtained for $h_{I_H}(k)$ (for $d-1\leq k\leq 2d-4$) is also used in some of the proofs for the explicit Noether-Lefschetz theorem for surfaces in $\Ps^3$ (e.g., see \cite{GreenF}). However, if $n>3$ then Corollary~\ref{corGreen} is insufficient to deduce the explicit Noether-Lefschetz theorem. Similarly, we were not able to deduce a good lower bound for the number of nodes to have defect from this Corollary.
To obtain an explicit Noether-Lefschetz theorem in higher (even) dimension Otwinowska  \cite{OtwHil} proved a result on the Hilbert function of ideals containing the ideal of a certain complete intersection.
This result seems still to be insufficient to obtain a sharp lower bound for the number of nodes to have defect. However, Otwinowska's result is strong enough to determine the largest component of the locus of nodal hypersurfaces with defect.
Moreover, if the famous conjecture \cite[Conjecture $V_m$]{EGH} of Eisenbud, Green and Harris on the Hilbert function of ideals containing a complete intersection holds true, then the result of Otwinowska is strong enough to deduce the minimal number of nodes.

\begin{notation}
Let us define $p_{n,d}=\binom{d+n+1}{n+1}-(n+1)(n+2)$. Then $p_{n,d}$  equals the Hilbert function of a complete intersection of multidegree $(1^{n+1},(d-1)^{n+1})$ evaluated in degree $d$, if $d>2$.
\end{notation}

 Consider a hypersurface $X \subset \Ps^{2n+2}$ of the form $\sum_{i=0}^n x_if_i$, with $\deg(f_i)=d-1$. If the $f_i$ are chosen sufficiently general then the singular locus is $x_0=\dots=x_n=f_0=\dots=f_n=0$. This is a complete intersection of multidegree $(1^{n+1},(d-1)^{n+1})$. The tangent space to the  equisingular deformation space  has codimension $p_{n,d}$ and an easy calculation shows that this space is nonreduced, i.e., the actual deformation space has the same codimension.
\begin{theorem} \label{thmHShigh}
Fix $n\in \N, n\geq 2$. Let $\sDEF_d\subset \C[x_0,\dots,x_{2n+2}]_d$ be the locus of nodal hypersurfaces with defect. Then there exists a $D$ such that if $d>D$ and  $L$ is an irreducible component of $\sDEF_d$ then $\codim L\geq p_{n,d}$.

Moreover, if Conjecture 1 of \cite{OtwHil} holds then we may take $D=2$ and any hypersurface in $\sDEF_d$ has at least $(d-1)^{n+1}$ nodes.
\end{theorem}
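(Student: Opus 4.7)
The plan is to follow the proof of Theorem~\ref{thmHS}, replacing the Macaulay/Gotzmann inputs by the sharper bounds of Otwinowska \cite{OtwHil} on Hilbert functions of linear systems that become base point free in a prescribed degree. First I would reduce the claim to a lower bound on the Hilbert function of the ideal of nodes. Let $X\in L$ be a generic member and let $I\subset R=\C[x_0,\dots,x_{2n+2}]$ be its ideal of nodes. By Lemma~\ref{lemDefLoc} defect is preserved on a Zariski open subset of $L$, so $L$ is contained in the equisingular deformation locus of $X$; the codimension of the latter in $R_d$ equals $h_I(d)$ (see Remark~\ref{rmkLargestDim}). Hence $\codim L\geq h_I(d)$ and it suffices to prove $h_I(d)\geq p_{n,d}$; under the conjectural hypothesis we must moreover show $\#\Sigma\geq (d-1)^{n+1}$.

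For a generic linear form $\ell$, the hyperplane $H=V(\ell)$ is disjoint from $\Sigma$ and meets $X$ in a smooth hypersurface. Let $S=\C[x_0,\dots,x_{2n+1}]$ and let $I_H\subset S$ be the image of $I$. The exact sequence
\[0\to (R/I)_{k-1}\stackrel{\ell}{\to}(R/I)_k\to (S/I_H)_k\to 0\]
yields $h_I(d)=\sum_{k=0}^{d}h_{I_H}(k)$, so it is enough to bound $h_{I_H}$ from below on $[0,d]$. Two features of $I_H$ are decisive. First, smoothness of $X\cap H$ implies that the images of the partials $\partial f/\partial x_j$ form a base point free subsystem of $(I_H)_{d-1}$. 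Second, Proposition~\ref{prpHS} turns the defect assumption into the strict inequality $h_I((n+1)d-(2n+3))<\#\Sigma$, and exactly as in the proof of Theorem~\ref{thmHS} this forces $h_{I_H}((n+1)(d-2))\geq 1$: otherwise $I_H$ would contain $S_k$ for every $k\geq (n+1)(d-2)$ and $h_I$ would stabilise too early to ever reach $\#\Sigma$.

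The crux is now to replace Corollary~\ref{corGreen} by Otwinowska's theorem from \cite{OtwHil}. That result says, roughly, that an ideal $J\subset S$ with $J_{d-1}$ base point free and with $h_J$ sufficiently small at the top degree $(n+1)(d-2)$ satisfies a lower bound $h_J(k)\geq h_{CI}(k)$ for $k$ in a prescribed range, where $h_{CI}$ denotes the Hilbert function in $S$ of a complete intersection of multidegree $(1^{n+1},(d-1)^{n+1})$. Applying this with $J=I_H$ and summing over $k=0,\dots,d$ gives
\[h_I(d)=\sum_{k=0}^{d} h_{I_H}(k)\geq \sum_{k=0}^{d} h_{CI}(k)=p_{n,d},\]
the last equality being the explicit computation of the Hilbert function of a complete intersection of the given multidegree in degree $d$. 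Otwinowska's unconditional result guarantees the range of validity covers $[0,d]$ only once $d$ exceeds some explicit constant $D=D(n)$, which is the source of the $d>D$ hypothesis. Granting \cite[Conjecture 1]{OtwHil}, the inequality $h_{I_H}(k)\geq h_{CI}(k)$ holds for every $k\in[0,(n+1)(d-2)]$ and every $d\geq 3$; summing up to the socle degree of the model complete intersection then gives
\[\#\Sigma\geq h_I((n+1)(d-2))\geq \sum_{k=0}^{(n+1)(d-2)} h_{CI}(k)=(d-1)^{n+1},\]
and the same chain yields the codimension estimate with $D=2$.

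The main obstacle is to align the hypotheses of Otwinowska's theorem with our setup. Base point freeness of $(I_H)_{d-1}$ is immediate from smoothness of $X\cap H$, but the smallness of $h_{I_H}$ at the top degree that is required to invoke her structural result has to be extracted from the defect condition together with a Macaulay-style propagation starting from $h_{I_H}((n+1)(d-2))\geq 1$. A secondary issue is the numerical verification, in the unconditional statement, that the range of degrees in which her bound is valid is wide enough to cover $[0,d]$ once $d$ is large; this is ultimately what pins down the value of $D$.
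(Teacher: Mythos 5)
Your reduction of $\codim L$ to $h_I(d)=\sum_{k\leq d}h_{I_H}(k)$, and your treatment of the conditional statement (summing the complete intersection Hilbert function up to degree $d$, resp.\ up to the socle degree $(n+1)(d-2)$, to get $p_{n,d}$ and $(d-1)^{n+1}$), agree with the paper. The gap is in the unconditional part: you attribute to Otwinowska's unconditional theorem a pointwise bound $h_{I_H}(k)\geq h_{CI}(k)$ valid on all of $[0,d]$ once $d$ is large, and then sum. That full-range comparison with the Hilbert function of a complete intersection of multidegree $(1^{n+1},(d-1)^{n+1})$ is essentially the content of \cite[Conjecture 1]{OtwHil} — which is exactly why the paper can prove the node-count bound only conditionally. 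What the unconditional \cite[Th\'eor\`eme 1]{OtwHil} delivers, in the form the paper uses it, is a bound in the single degree $d$ for an Artinian Gorenstein ideal $I$ of socle degree $(n+1)(d-2)$ containing the restricted partials, namely $\codim I_d\geq\binom{d+n}{n}-(n+1)^2$, together with the structure statement that in case of equality $I$ agrees up to degree $d$ with a complete intersection ideal of multidegree $(1^{n+1},(d-1)^{n+1})$. Summation over $[0,d]$ therefore works only in the equality case; your proposal contains no argument for the strict-inequality case, and Macaulay-type downward propagation from a single degree is too weak here (this is precisely the failure of Corollary~\ref{corGreen} for $n\geq 2$ that motivated the paper to abandon the strategy of Theorem~\ref{thmHS}).

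The paper closes the strict-inequality case by a geometric detour that is absent from your sketch. The restricted defect class exhibits $X_H$ as a point of the Noether--Lefschetz locus in $\Ps^{2n+1}$, and the Gorenstein ideal $I$ is attached to the tangent space of the component $\NL(\gamma_H)$; if $\codim I_d$ exceeds the minimal value, then $\NL(\gamma_H)$ is not the component of hypersurfaces containing an $n$-dimensional linear space, and by \cite{OtwBig} one gets the stronger bound $h_{J_H}(d)\geq c_0$, the codimension attached to the quadric component. Green's hyperplane restriction theorem (Theorem~\ref{thmGreenHyp}), read backwards through explicit Macaulay expansions of $c_0$, then converts this into $h_J(d)>p_{n,d}$ for $d$ large, with no control of $h_{J_H}(k)$ for $k<d$ needed. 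Your implicit Gorenstein envelope of $I_H$ could replace $I$ in the equality case, but without the Noether--Lefschetz interpretation you cannot invoke \cite{OtwBig}, so the unconditional codimension bound does not follow from your argument as written; the issue you flag (verifying the ``range of validity'') is real, but the missing case analysis and its geometric input are the essential obstruction.
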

\begin{proof}
Let $X\in L$.
From Lemma~\ref{lemDefLoc} it follows that a general equisingular deformation of $X$ also has  defect, i.e., $L$ is also an irreducible component of the equisingular deformation space of $X$. 

Fix a general hyperplane $H$. Since $X$ has defect there is a class $\gamma$ in $H_{2n+2}(X,\Q)$ which is not the multiple of the intersection of classes of hyperplanes. The intersection product of $\gamma$ with $H$ yields a nonzero Hodge class in $H^{2n}(X_H,\Q)_{\prim}$. The Noether-Lefschetz locus of hypersurface of degree $d$ in $\Ps^{2n+1}$ parametrizes hypersurfaces having a nonzero Hodge class in $H^{2n}(X_H,\Q)_{\prim}$.
 In particular, we have a morphism from an open subset of $L$ to an irreducible component $\NL(\gamma_H)$ of this Noether-Lefschetz locus.
The differential of this map defines a map $d_H$ from the tangent space  $T_XL$ to the tangent space of $\NL(\gamma_H)$ at $X_H$.

Let $F$ be a defining polynomial for $X$. The tangent space $T_XL$ can be identified with the degree $d$ part of the saturation of the Jacobian ideal of $F$. Without loss of generality we may assume that $H=\{x_{2n+2}=0\}$.
We have that $J_d(F)|_{x_{2n+2}=0}$ is contained in $T_{X_H} \NL(\gamma_H)$. As explained in \cite{OtwHil} there exists an ideal $I\subset \C[x_0,\dots,x_{2n+1}]$, such that $ T_X\NL(\gamma_H)$ is contained in $I_d$ and $\C[x_0,\dots,x_{2n+1}]/I$ is an Artinian Gorenstein ring with socle degree $(n+1)d-2n-2$. Since $X_H$ is smooth we have that $I$ contains a complete intersection of multidegree $(d-1)^{2n+2}$.
Hence we can apply \cite[Th\'eor\`eme 1]{OtwHil}. From this it follows that there is a constant $D$ depending on $n$ such that for $d\geq D$  we have $\codim I_d\geq \binom{d+n}{n}-(n+1)^2$.

Let $J$ be the ideal of the nodes of $X$. Then 
\[ J(F(x_0,\dots,x_{2n+1},0)) \subset J|_{x_{2n+2}=0} \subset I.\]
If $\codim I_d=\binom{d+n}{n}-(n+1)^2$ holds then we have by \cite[Th\'eor\`eme 1]{OtwHil} that $I$ up to degree $d$ coincides with a complete intersection ideal of multidegree $(1^{n+1},(d-1)^{n+1})$. In this case $h_J(d)$ is at least
\[  \sum_{k=0}^d h_I(k)= \sum_{k=0}^d \binom{k+n}{n}-(n+1)-(n+1)^2=p_{n,d}.\]

If the codimension of $I_d$ is larger than $\binom{d+n}{n}-(n+1)^2$ then $\NL(\gamma_H)$ is different from  the component of $\NL$ parametrizing hypersurfaces containing an $n$-dimensional linear space. From \cite{OtwBig} it follows that for $d$ sufficiently large, the largest component of this type consists of hypersurfaces containing a quadric of dimension $n$.
This locus has codimension
\[ c_0:=\binom{d+n+1}{n+1}-\binom{d+n-1}{n+1}-\frac{3n^2+9n+4}{2}.\]

If  $n\geq 16$ then the Macaulay expansion of  $c_0$ equals
\[ \binom{d+n }{d}+\sum_{i=4}^{d-1} \binom{i+n-1}{i}+\binom{n-1}{3}+\binom{n-5}{2}+\binom{n-15}{1}.\]
For $n\leq 15$ we have that the the Macaulay expansion of $c_0$ equals
\[ \binom{d+n}{d}+\sum_{i=6}^{d-1} \binom{i+n-1}{i}+\sum_{i=1}^5 \binom{i+a_i}{i}\]
with $n-1 \geq a_7\geq a_6\geq\dots\geq a_1\geq -1$.

Suppose now that $n\geq 16$. Since $c\mapsto c_{<d>}$ increases with $c$ and 
 \[h_J(d)_{<d>}\geq h_{J_H}(d) \geq  c_0\] (Theorem~\ref{thmGreenHyp}) we have that $h_J(d)$ is at least
\[\binom{d+n+1}{d}+\sum_{i=4}^{d-1} \binom{i+n}{i}+\binom{3+n-3}{3}+\binom{2+n-6}{2}+\binom{1+n-15}{1}.\]
In particular, there exists a constant $C_n$ depending only on $n$ such that the right hand side equals $\binom{d+n+1}{d}+\binom{d+n}{d}-C_n$. Therefore we have that for $d$ sufficiently large $h_J(d)>p_{n,d}$ holds. If $n<16$ then a similar argument will yield the proof for large $d$.

Suppose now that \cite[Conjecture 1]{OtwHil} holds.  Let $I'\subset S_H$ be the ideal of a complete intersection of multidegree $(1^{n+1},(d-1)^{n+1})$. Then \cite[Conjecture 1]{OtwHil} implies $h_I(k)\geq h_{I'}(k)$ for all $k\leq (n+1)d-2n-1$. In particular,
\[ p_J\geq \sum_{k=0}^{nd-2n-2} p_{J_H}(k)\geq \sum_{k=0}^{nd-2n-2}  h_{I'}(k) = (d-1)^n.\]
and
\[h_J(d)\geq \sum_{k=0}^{d} p_{J_H}(k)\geq \sum_{k=0}^{d}  h_{I'}(k)=p_{n,d}.\]
\end{proof}

\begin{remark} Otwinowska shows in \cite{OtwHil} that \cite[Conjecture 1]{OtwHil} is implied by the Eisenbud-Green-Harris conjecture on the Hilbert function of ideals containing a complete intersection.
\end{remark}

We switch now to the case of double covers.

\begin{theorem} \label{thmDC} Let $f\in \C[x_0,x_1,x_2,x_3]$ be a squarefree polynomial of degree $2d$, such that $V(f)$ is a nodal surface. Let $X:y^2=f$ be the double cover branched along $f$. Suppose $h^4(X)>1$. Then $X$ has at least $d(2d-1)$ nodes. If $d
\geq 2$ holds and $X$ has precisely $d(2d-1)$ nodes then there  exist forms $\ell,g,h$ of degree $1$, $d$ and $2d-1$ respectively such that $f=\ell g+h^2$.
 \end{theorem}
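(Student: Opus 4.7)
The plan is to adapt the proof of Theorem~\ref{thmHS} to the weighted projective space $\Ps(d,1,1,1,1)$. Write $R=\C[x_0,x_1,x_2,x_3]$ and let $J\subset R$ be the ideal of the nodes of $V(f)$, which agrees with the projection of the nodes of $X$. Proposition~\ref{prpHS} with $n=3$, $m=2$, $\sum w_i=d+4$ gives $\delta(X)=\#\Sigma-h_J(3d-4)$, so $\delta\geq 1$ forces $h_J(3d-4)<\#\Sigma$. After a generic coordinate change assume $V(f)\cap V(x_3)$ is smooth and contains no node; then $x_3$ is a nonzerodivisor on $R/J$ and the exact sequence
\[0\to(R/J)_{k-1}\xrightarrow{x_3}(R/J)_k\to(S_0/J_H)_k\to 0\]
(with $S_0=\C[x_0,x_1,x_2]$, $J_H=J|_{x_3=0}$) yields $h_J(k)=\sum_{i=0}^{k}h_{J_H}(i)$. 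Just as in Theorem~\ref{thmHS}, vanishing of $h_{J_H}(3d-3)$ would make $h_J$ constant from $3d-4$ on, contradicting $h_J(3d-4)<\#\Sigma$; so $h_{J_H}(3d-3)>0$. Fix a codimension-one subspace $W\subset(S_0)_{3d-3}$ containing $(J_H)_{3d-3}$ and let $I'\supset J_H$ be the associated Artinian Gorenstein ideal with socle degree $3d-3$, so $h_{I'}(k)=h_{I'}(3d-3-k)$.

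The partials of $f|_{x_3=0}$ lie in $(J_H)_{2d-1}\subset I'_{2d-1}$ and have no common zero in $\Ps^2$, so $I'_k$ is base-point-free for every $k\geq 2d-1$. Repeated application of Corollary~\ref{corGreen} then gives $h_{I'}(k)\geq 3d-2-k$ on $[2d-2,3d-3]$, and by Gorenstein symmetry $h_{I'}(k)\geq k+1$ on $[0,d-1]$. Unlike the $\Ps^4$ setting these two ranges leave a middle gap $[d,2d-3]$, and the key new step is a Macaulay bootstrap: if $h_{I'}(k_0)=c<d$ for some $k_0\in[d,2d-3]$ then $c<d\leq k_0$, so $c^{\langle k_0\rangle}=c$ and iterating Theorem~\ref{thmMac} gives $h_{I'}(k)\leq c$ for every $k\geq k_0$, contradicting $h_{I'}(2d-2)\geq d$. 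Hence $h_{I'}(k)\geq d$ on $[d-1,2d-2]$, and summing the three ranges gives
\[\sum_{k=0}^{3d-3}h_{I'}(k)\geq\tfrac{d(d-1)}{2}+d^2+\tfrac{d(d-1)}{2}=d(2d-1).\]
Together with $\#\Sigma>h_J(3d-4)\geq\sum_{k=0}^{3d-4}h_{I'}(k)=\sum_{k=0}^{3d-3}h_{I'}(k)-1$ this yields $\#\Sigma\geq d(2d-1)$.

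For the equality case, with $d\geq 2$ and $\#\Sigma=d(2d-1)$, every inequality above is sharp, so $h_{J_H}=h_{I'}$ and equals the Hilbert function of a codimension-three complete intersection of type $(1,d,2d-1)$ in $S_0$; in particular $J_H$ has a unique linear generator $\ell$, and reducing modulo $\ell$ yields an Artinian Gorenstein ideal in $S_0/(\ell)\cong\C[x_0,x_1]$ which is automatically a complete intersection of type $(d,2d-1)$, giving $J_H=(\ell,g,h)$ with $\deg g=d$, $\deg h=2d-1$. Lifting these generators to $\tilde\ell,\tilde g,\tilde h\in J$ of the same degrees, the subideal $(\tilde\ell,\tilde g,\tilde h)\subset J$ cuts out a length-$d(2d-1)$ complete intersection in $\Ps^3$ containing the reduced length-$d(2d-1)$ scheme $\Sigma$, so $J=(\tilde\ell,\tilde g,\tilde h)$. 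Since $J$ is a complete intersection $J^{(2)}=J^2$, and nodality of $V(f)$ gives $f\in J^2$; expanding $f$ in the six products $\tilde\ell^2,\tilde\ell\tilde g,\tilde\ell\tilde h,\tilde g^2,\tilde g\tilde h,\tilde h^2$ and noting that for $d\geq 2$ the last two have degree exceeding $2d$, one obtains $f=\tilde\ell\cdot G+\delta\tilde g^2$ with $G\in R_{2d-1}$ and $\delta\in\C$. Nodality forces $\delta\neq 0$ (else $V(\tilde\ell)$ is a component of $V(f)$, producing infinitely many singularities), and then $\delta=c^2$ over $\C$ gives $f=\tilde\ell\cdot G+(c\tilde g)^2$, the desired factorization. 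The main obstacle is the middle-range Macaulay bootstrap; the rest closely parallels Theorem~\ref{thmHS}.
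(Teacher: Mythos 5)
Your lower-bound argument coincides with the paper's: the same Artinian Gorenstein quotient $I'$ of socle degree $3d-3$, Corollary~\ref{corGreen} on the range $[2d-2,3d-3]$, Gorenstein duality on $[0,d-1]$, and Macaulay's bound to fill the middle range (the paper compresses your ``bootstrap'' into a single citation of Theorem~\ref{thmMac}); the bookkeeping agrees. Where you genuinely diverge is the equality case. The paper stays on the branch curve: equality gives $h_I(1)=3$, hence a linear form through all nodes, taken to be $x_3$; writing $f=f_0(x_0,x_1,x_2)+x_3g$, it analyzes the double points of the plane curve $V(f_0)$, counts the nodes of each type, and invokes a result on nodal plane curves from \cite{EllSyz} to force $f_0$ to be a square. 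You instead transplant the equality analysis of Theorem~\ref{thmHS}: equality forces $h_{J_H}$ to be the Hilbert function of a $(1,d,2d-1)$ complete intersection, the codimension-two ``Gorenstein implies complete intersection'' theorem identifies $J_H=(\ell,g,h)$, you lift to $J$, conclude that $J$ itself is a complete intersection, and then $f\in J^{(2)}=J^2$ plus a degree count yields $f=\ell G+\delta g^2$ with $\delta\neq0$. This is a correct and arguably more uniform route: it makes the double-solid case run exactly parallel to the $\Ps^4$ case and avoids the plane-curve input, at the price of two standard commutative-algebra facts (Gorenstein of codimension two is a complete intersection; symbolic and ordinary powers agree for complete intersection ideals). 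Two steps deserve an explicit line: (i) Gorensteinness of $S_0/J_H$ is not just equality of Hilbert functions --- you should note that $J_H\subseteq I'$ together with $h_{J_H}=h_{I'}$ forces $J_H=I'$, which is Gorenstein by construction; (ii) that the lifted triple $(\tilde\ell,\tilde g,\tilde h)$ is a regular sequence is not automatic for arbitrary lifts, but it follows immediately since its common zero locus meets $V(x_3)$ in $V(J_H)=\emptyset$ (as $S_0/J_H$ is Artinian) and is therefore finite; after that the length-plus-saturation argument (or a Hilbert-function comparison, as in Theorem~\ref{thmHS}) gives $J=(\tilde\ell,\tilde g,\tilde h)$. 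Finally, your output degrees ($\deg G=2d-1$, $\deg$ of the squared form $=d$) are the degree-consistent reading of the statement; the theorem's list ``degree $1$, $d$ and $2d-1$'' has the roles of $g$ and $h$ interchanged, as the paper's own proof and the example following it confirm.
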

\begin{proof}
Without loss of generality we may assume that $X_H=X\cap V(x_3)$ is smooth, in particular, none of the nodes of $X$ is contained in $V(x_3)$. Let $R=\C[x_0,x_1,x_2,x_3]$ and $S=\C[x_0,x_1,x_2]$. Let $I\subset R$ be the ideal of the nodes of $V(f)$. Note that the nodes of $X$ correspond one-to-one with the nodes of $V(f)$. Moreover, since $y$ is in the Jacobian ideal of $X$ we have that the Jacobian rings of $X$ and of $V(f)$ are isomorphic.

Since $h^4(X)\geq 2$ it follows from Proposition~\ref{prpHS} that we have $h_I(3d-4)<p_I(3d-4)$.
Let $I_H\subset S$ be the ideal obtained by substituting $x_3=0$ in $I$. 
Consider the exact sequence
\[ 0 \to (R/I)_{k-1}\stackrel{x_3}{\to} (R/I)_k \to (S/I_H)_k \to 0.\]
As in the proof of Theorem~\ref{thmHS} we obtain that $h_{I_H}(3d-3)>0$. Fix  a codimension one subspace $W$ of $S_{3d-3}$ containing $(I_H)_{3d-3}$. Define $I'\subset S$ by  $I'_e=\{ f \mid f S_{3d-3-e} \subset W\}$ if $e\leq 3d-3$ and $I'_e=S_e$ for $e\geq 3d-2$.
Then $I'$ is an ideal, containing $I_H$. Moreover $S/I'$ is a Gorenstein ideal with socle degree $3d-3$ and hence $h_{I'}(k)=h_{I'}(3d-3-k)$.

The linear system $I'_{2d-1}$ contains the partials of $f$ specialized at $x_3=0$ and since $X_H$ is smooth this linear system must be  base point free.  From Corollary~\ref{corGreen}   we obtain that $h_{I'}(k)\geq 3d-2-k$ for $2d-2\leq k \leq 3d-2$. 
Using Gorenstein duality it follows that $h_{I'}(k)\geq k+1$ for $k\leq d-1$.
Theorem~\ref{thmMac} implies that $h_{I'}(k)\geq d$ for $d\leq k\leq 2d-2$. 
Combining everything we obtain
\begin{eqnarray*} p_I&=&p_I(3d-3)\geq h_I(3d-3)=\sum_{i=0}^{3d-3} h_{I_H}(i)\\&\geq& 2\sum_{i=0}^{d-1}(i+1) +d(d-2)= d(d+1)+d(d-2)=d(2d-1).\end{eqnarray*} 

Suppose now that $p_I$ is exactly $d(2d-1)$. Then we have $h_I(1)=3$. In particular there is a linear form $\ell$ that vanishes at all the nodes. If $\ell$ is a factor of $f$ then we can write $f=\ell f_1$. All the nodes of $V(f)$ are contained in $V(\ell,f_1)$ which consists of $2d-1$ points. Since  we know that $X$ has at least $d(2d-1)$ nodes this cannot happen and therefore $\ell$ is not a factor of $f$. 

Assume  that $\ell=x_3$ and write $f=f_0(x_0,x_1,x_2)+x_3g(x_0,x_1,x_2,x_3)$. If $p$ is a node of $V(f)$ then $g$ vanishes at $p$ and $p$ is a double point of $f_0=0$. If $f_0$ contains a component with multiplicity at least three then $X$ contains a singularity which is not a node, in particular, we can write $f_0=f_1^2f_2$, such that $f_1$ and $f_2$ are coprime and both are squarefree. 
Hence the locus of the nodes of $V(f)$ consists of points $p$ such that $f_1(x_0,x_1,x_2)=g(x_0,x_1,x_2,0)=0$ together with points $p$ such that $g(x_0,x_1,x_2,0)=0$ and $p$ is a double point of $f_2$.

Denote with $e_i$ the degree of $f_i$. Then there are precisely $e_1(2d-1)$ points of the former type and at most $\frac{1}{2}(e_2-1)e_2$ points of the second type. Their sum is strictly less than $d(2d-1)$ if $e_1\neq 0, 2d$. If $e_1$ were $2d$ then the set of nodes of $V(f)$ is also the set of nodes of a (reducible) plane curve of degree $2d$. From \cite[Proposition 3.6]{EllSyz} it follows that then $h_I(k)=d(2d-1)$ holds for $k\geq 2d-2$, contradicting $h_I(3d-3)<d(2d-1)$. Hence $e_1=d$ and $f_0$ is  a square.
\end{proof}

\begin{example} In order to show that the bound $d(2d-1)$ for the number of nodes is sharp, consider $y^2=h^2+\ell g$ with $\deg(f)=d$ and $\deg(g)=2d-1$. Then for general $f,g,\ell$ the singular locus is a complete intersection of multidegree $(1,d,2d-1)$, i.e., it consists of $d(2d-1)$ points. Moreover $\ell=y-f=0$ defines a Weil divisor that is not $\Q$-Cartier and hence the double cover has defect.
\end{example}

\section{The Ciliberto-Di Gennaro conjecture}\label{secCil}
In this section we prove the following conjecture for $d\geq 7$:

\begin{conjecture} Let $X\subset \Ps^4$ be a non-factorial nodal threefold of degree $d$ with at most $2(d-2)(d-1)$ nodes then either $X$ contains a plane or a quadric surface and if $X$ contains a quadric surface then $X$ has precisely $2(d-2)(d-1)$ nodes.
\end{conjecture}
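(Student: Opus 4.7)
The approach follows the strategy of the proof of Theorem~\ref{thmHS}, with a refined Hilbert function analysis. Fix a general hyperplane $H=V(x_4)$ so that $X_H$ is smooth; let $I\subset R$ be the ideal of the nodes of $X$, set $S=\C[x_0,x_1,x_2,x_3]$, and let $I_H\subset S$ be the image of $I$ under $x_4\mapsto 0$. As in that proof, fix a codimension one subspace $W$ of $S_{2d-4}$ containing $(I_H)_{2d-4}$ and construct the Gorenstein Artinian ideal $I'\supset I_H$ of socle degree $2d-4$; then $I'_{d-1}$ is base point free, $h_{I'}$ is symmetric about $d-2$, and $h_{I'}(k)\geq\min(k+1,2d-3-k)$. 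The hypothesis $p_I\leq 2(d-1)(d-2)$ combined with
\[ p_I\;\geq\;\sum_{k=0}^{2d-4}h_{I_H}(k)\;\geq\;\sum_{k=0}^{2d-4}h_{I'}(k)\;\geq\;(d-1)^2 \]
pins down the total length of $S/I'$ in the range $[(d-1)^2,\,2(d-1)(d-2)]$. Choosing $W$ generically (when $h_{I_H}(2d-4)\geq 2$; the case $h_{I_H}(2d-4)=1$ requires a separate argument) arranges $(I')_1=(I_H)_1$, so that the case analysis runs on $h_{I_H}(1)=h_{I'}(1)\in\{2,3,4\}$.

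The main technical step is to rule out $h_{I'}(1)=4$: here $S/I'$ is a codimension four Gorenstein Artinian algebra with no linear form. For a complete intersection with generator degrees $a_1\leq a_2\leq a_3\leq a_4$, base point freeness of $I'_{d-1}$ forces $a_4\leq d-1$; combined with $\sum a_i=2d$ and $a_i\geq 2$, the minimum product is $4(d-1)(d-3)$, achieved by type $(2,2,d-3,d-1)$, and this strictly exceeds $2(d-1)(d-2)$ for $d\geq 5$. For the non-complete-intersection Gorenstein case, I would iterate Corollary~\ref{corMacDown} downward from the symmetric value $h_{I'}(2d-5)=4$, combined with the strict decrease of $h_{I'}(k)$ in degrees $\geq d-1$ from Corollary~\ref{corGreen}, to deduce a lower bound of the form $h_{I'}(k)\geq 2k+1$ on a long enough intermediate range that $\sum h_{I'}(k)>2(d-1)(d-2)$ when $d\geq 7$. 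This non-complete-intersection estimate is the main obstacle, and is where the threshold $d\geq 7$ enters the argument.

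Once $h_{I'}(1)\leq 3$ has been established, the exact sequence $0\to(R/I)_{k-1}\stackrel{x_4}{\to}(R/I)_k\to(S/I_H)_k\to 0$ together with $(I')_1=(I_H)_1$ lifts the linear forms in $(I_H)_1$ to linear forms in $I_1$. If $h_{I'}(1)=3$, I would repeat the case analysis for Gorenstein Artinian quotients of $\C[x_0,x_1,x_2]$ with $h(1)=3$, socle degree $2d-4$, and degree-$(d-1)$ part base point free: using the Buchsbaum--Eisenbud description of codimension three Gorenstein ideals together with the base-point-free analysis, the minimum length of such a quotient is exactly $2(d-1)(d-2)$, realised by the complete intersection of type $(2,d-2,d-1)$. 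The hypothesis $p_I\leq 2(d-1)(d-2)$ then forces every inequality in the fundamental chain to be an equality, so $h_I$ matches the Hilbert function of the complete intersection of type $(1,2,d-2,d-1)$ in $R$ and $I$ is this complete intersection. The argument at the end of the proof of Theorem~\ref{thmHS}, using $f\in I^2$ and the degree count of the $f_if_j$, then yields $f\in(\ell,q)$ where $\ell\in I_1$ and $q\in I_2$ is the quadric generator, so $X$ contains the quadric surface $V(\ell,q)$ and has exactly $2(d-1)(d-2)$ nodes. The case $h_{I'}(1)=2$ is handled analogously in two variables, yielding two linear forms in $I_1$ and $f\in(\ell_1,\ell_2)$, so $X$ contains the plane $V(\ell_1,\ell_2)$.
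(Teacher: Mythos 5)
Your overall frame (general hyperplane, the Gorenstein ideal $I'$ of socle degree $2d-4$, base point freeness of $I'_{d-1}$, the chain $p_I\geq\sum h_{I_H}(k)\geq\sum h_{I'}(k)$) matches the paper, but the case analysis you run on $h_{I'}(1)\in\{2,3,4\}$ breaks down exactly at its load-bearing claims. The most serious problem is the branch $h_{I'}(1)=3$: your assertion that a Gorenstein Artinian quotient of $\C[x_0,x_1,x_2]$ with $h(1)=3$, socle degree $2d-4$ and base-point-free degree-$(d-1)$ part has length at least $2(d-1)(d-2)$ is false. Take $X$ containing a plane $V(\ell_1,\ell_2)$ with the usual $(d-1)^2$ nodes forming a $(1,1,d-1,d-1)$ complete intersection, plus one extra node off the plane lying on $V(\ell_1)$ but not on $V(\ell_2)$; then only one linear form vanishes on all nodes, so $h_{I'}(1)=3$, while $\sum_k h_{I'}(k)\leq h_I(2d-4)\leq p_I=(d-1)^2+1$, far below $2(d-1)(d-2)$, and $I'_{d-1}$ is still base point free because it contains the restricted partials. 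So no numerical or Buchsbaum--Eisenbud argument can yield your claimed minimum in that branch, and the branch conclusion (quadric with exactly $2(d-1)(d-2)$ nodes) is simply wrong there: the correct outcome can be the plane case. Separating ``plane-type'' from ``quadric-type'' ideals when the Hilbert function is small is precisely the hard step, and it is geometric, not numerical: the paper splits on $h_I(d-4)\lessgtr 2d-7$ (Lemmas~\ref{lemHilba} and~\ref{lemHilbb}), works with a \emph{minimal} defect-causing subset of the nodes rather than the full node ideal, invokes Voisin's result from \cite{VoiCon} to show that a small Hilbert function forces $I$ to agree in low degrees with the ideal of a line or a conic, and then uses Lemma~\ref{lemdims} together with Noether--Lefschetz theory (\cite{GreenF}, the interpretation of $J_d$ as tangent space to the equisingular deformations) to produce the complete intersection $(1,1,d-1,d-1)$ or $(1,2,d-2,d-1)$ inside $X_{\sing}$. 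None of this machinery appears in your proposal, and your final step ($f\in(\ell_1,\ell_2)$, resp.\ $f\in(\ell,q)$) also needs exactly this complete intersection subscheme, not merely linear forms through all nodes, when extra nodes are present.

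The branch $h_{I'}(1)=4$ has the same defect in milder form: you treat only the complete intersection case rigorously, and your proposed substitute for the general Gorenstein case cannot work. Iterating Corollary~\ref{corMacDown} downward from $h_{I'}(2d-5)=4$ only propagates a bound of size about $4$ through the middle degrees, never anything like $2k+1$, so it cannot push the total length past $2(d-1)(d-2)$; the paper's analogous estimate (Lemma~\ref{lemHilbb}) works only under the hypothesis $h_I(d-4)>2d-7$, which is where Macaulay's theorem genuinely propagates $h_I(j)>2j+1$ downward, and the threshold $d\geq 7$ comes from that count (with $d=7$ handled by a separate ad hoc analysis) rather than from anything visible in your set-up. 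As it stands, the proposal reproduces the easy parts of the paper's argument but leaves out, or gets wrong, the two steps that carry the theorem.
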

Note that for $d=1,2$ the conjecture is trivially true. If $d=3$ then $2(d-2)(d-1)=(d-1)^2$ and the statement follows from Theorem~\ref{thmHS}. Hence this conjecture remains open for $d=4,5,6$.

Note that in \cite{CiroHS}  Ciliberto-Di Gennaro proved a weaker form of this conjecture, namely they showed that if $X$ is non-factorial and has at most $2(d-2)(d-1)$ nodes then $X$ contains a plane, a quadric surface or a singular surface.

\begin{lemma}\label{lemdims} Let  $I\subset S:=\C[x_0,\dots,x_n]$ be a homogeneous ideal such that $S/I$ is Artinian Gorenstein of socle degree $N.$ Let $d_k$ be the smallest integer $t$ such that the dimension of the base locus of $I_t$ is at most $k$. Then 
\[ \sum_{k=-1}^{n-1} d_k \geq N+n+1\]
\end{lemma}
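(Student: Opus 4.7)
The plan is to construct generic forms $f_k\in I_{d_k}$ for $k=n-1,n-2,\dots,-1$ that form a regular sequence in $S$, and then to compare the resulting complete intersection with $S/I$. Setting $J=(f_{-1},f_0,\dots,f_{n-1})$, the ideal $J$ will be generated by $n+1$ forms in a polynomial ring of Krull dimension $n+1$ and will cut out the empty scheme in $\Ps^n$, so $S/J$ is an Artinian complete intersection of socle degree $\sum_{k=-1}^{n-1} d_k-(n+1)$. Since $f_k\in I_{d_k}\subset I$, there is a graded surjection $S/J\twoheadrightarrow S/I$, and a nonzero graded quotient cannot have larger socle degree than its source; this forces $N\leq \sum d_k-(n+1)$, which is the desired inequality.

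To build the regular sequence I would proceed by descending induction on $k$. For $k=n-1$ any nonzero element of $I_{d_{n-1}}$ works (and $I_{d_{n-1}}\neq 0$ by the definition of $d_{n-1}$). For the inductive step, suppose $f_{n-1},\dots,f_{k+1}$ have been chosen so that $Z_{k+1}:=V(f_{n-1},\dots,f_{k+1})$ has pure dimension $k+1$. I want to choose $f_k\in I_{d_k}$ that does not vanish identically on any irreducible component of $Z_{k+1}$. The key observation is that every such component $W$ has dimension $k+1$, while by definition of $d_k$ the base locus $V(I_{d_k})$ has dimension at most $k$. Therefore $W\not\subset V(I_{d_k})$, so the linear subspace $\{f\in I_{d_k}\mid f|_W\equiv 0\}$ is a proper subspace of $I_{d_k}$. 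Since $Z_{k+1}$ has only finitely many components, a generic $f_k\in I_{d_k}$ lies outside the union of these proper subspaces; Krull's principal ideal theorem then yields $V(f_k)\cap Z_{k+1}$ of pure dimension $k$. At the final step $k=-1$, the ideal $I_{d_{-1}}$ is base-point free on $\Ps^n$ by definition, so it has no common zero with the finite set $Z_0$, and a generic $f_{-1}$ avoids every point of $Z_0$; hence $V(J)=\emptyset$ in $\Ps^n$ and $S/J$ is Artinian.

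I do not foresee any serious obstacle. The one place where the argument could conceivably break is the inductive step — one might worry that some component $W$ of $Z_{k+1}$ is swallowed by the base locus of $I_{d_k}$ — and this is ruled out immediately by the strict inequality $\dim W=k+1>k\geq \dim V(I_{d_k})$ coming straight from the definition of $d_k$. The remaining ingredients (the socle degree of a length $n+1$ complete intersection of forms of degrees $d_{-1},\dots,d_{n-1}$ in $n+1$ variables equals $\sum d_k-(n+1)$, and socle degree is monotone under graded quotients) are standard.
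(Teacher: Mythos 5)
Your proof is correct and follows essentially the same route as the paper: the paper's argument likewise rests on the observation that $I$ contains a complete intersection ideal of multidegree $(d_{-1},\dots,d_{n-1})$ and then compares with the socle degree $\sum_{k}(d_k-1)$ of that complete intersection, using $I_N\neq S_N$. Your generic regular-sequence construction simply makes explicit the existence of the complete intersection, which the paper asserts without detail.
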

\begin{proof}
The ideal $I$ contains a complete intersection ideal $I'$ of multidegree $(d_{n-1},\dots,d_{-1})$. In particular, $I'_k=\C[x_0,\dots,x_n]_k$ for $k> \sum_{i=-1}^{n-1} (d_i-1)$. Since $I'_k\subset I_k$ and $I_N\neq \C[x_0,\dots,x_n]_N$ we have $\sum_{i=-1}^{n-1} (d_i-1)\geq N$.
\end{proof}

\begin{lemma}\label{lemHilba} Suppose $d\geq 6$. Let $X\subset \Ps^4$ be a nodal threefold of degree $d$ with at most $2(d-2)(d-1)$ nodes. Assume that $X$ has defect.
Let $J$ be the ideal of $X_{\sing}$, $H=\{\ell=0\}$ a general hyperplane, $J_H=(J,\ell)$. Let $I$ be an ideal containing $J_H$, such that $S/I$ is Artinian Gorenstein of socle degree $2d-4$.

If $h_I(d-4)\leq 2d-7$ then $X_{\sing}$ contains a subset which is a complete intersection of multidegree $(1,1,d-1,d-1)$ or of multidegree $(1,2,d-2,d-1)$.
\end{lemma}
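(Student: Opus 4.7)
The plan is to apply Lemma~\ref{lemdims} to the Artinian Gorenstein ring $S/I$ (with $n=3$ and socle degree $N=2d-4$), use the base-point-freeness of $I_{d-1}$ to bound $d_{-1}$, and then exploit the hypothesis $h_I(d-4)\le 2d-7$ to pin down the multidegree of the resulting complete intersection. Since $I\supseteq J_H$ contains the partial derivatives of the defining polynomial of $X$ restricted to $H$, and $X\cap H$ is smooth by genericity of $\ell$, the linear system $I_{d-1}$ is base-point-free. Lemma~\ref{lemdims} then produces integers $1\le d_2\le d_1\le d_0\le d_{-1}\le d-1$ with $d_2+d_1+d_0+d_{-1}\ge 2d$, together with a regular sequence $I'=(g_2,g_1,g_0,g_{-1})\subset I$ of those respective degrees; hence $d_2+d_1+d_0\ge d+1$.

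The central step is a combinatorial case analysis showing that $(d_2,d_1,d_0,d_{-1})$ must be either $(1,1,d-1,d-1)$ or $(1,2,d-2,d-1)$. A direct computation from the Koszul resolution of $I'$ yields $h_{I'}(d-4)=d-3$ for the first multidegree and $h_{I'}(d-4)=2d-7$ for the second; every other admissible multidegree produces $h_{I'}(d-4)>2d-7$. Since the containment $I'\subset I$ only gives $h_I(d-4)\le h_{I'}(d-4)$, eliminating such cases requires more than a direct comparison: the argument exploits Gorenstein duality $h_I(k)=h_I(2d-4-k)$ together with the Macaulay/Green inequalities of Section~\ref{sectMac} to propagate the bound $h_I(d-4)\le 2d-7$ across all degrees, eventually forcing $I$ itself to carry enough low-degree generators so that the multidegree must be one of the two listed.

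Once the multidegree is reduced, the low-degree generators of $I\subset \bar S = R/\ell$ lift to $J\subset R$: two linear forms $\ell_1,\ell_2\in J$ in case 1, and one linear plus one quadratic form in $J$ in case 2. These cut out a plane $\pi$ (respectively, a quadric surface $Q$) in $\Ps^4$ containing $X_{\sing}$. Cheltsov's lower bound $|X_{\sing}|\ge(d-1)^2$ from Theorem~\ref{thmHS} exceeds the maximum number $\binom{d}{2}$ of double points of a reduced plane curve of degree $d$, and similarly the maximum number of nodes of a degree-$d$ curve on a quadric surface, forcing $\pi\subseteq X$ (respectively $Q\subseteq X$). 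Writing $f=\ell_1 h_1+\ell_2 h_2$ with $\deg h_i=d-1$ in case 1, or $f=\ell_1 a + q b$ with $\deg a=d-1$, $\deg b=d-2$ in case 2, a local analysis at each node on $\pi$ (respectively $Q$) identifies the nodes of $X$ lying on $\pi$ (resp.\ $Q$) with the complete intersection $V(\ell_1,\ell_2,h_1,h_2)$ (resp.\ $V(\ell_1,q,a,b)$) of the prescribed multidegree, which is by construction a subset of $X_{\sing}$.

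The main obstacle is the combinatorial case analysis in the second paragraph: systematically eliminating all admissible multidegrees other than the two listed requires a delicate analysis of the gap between $h_{I'}$ and $h_I$, combined with Gorenstein duality and the Macaulay/Green machinery. A secondary but routine point is bounding the number of singular points of a degree-$d$ curve lying on a quadric surface, which is needed to force $Q\subseteq X$ in the second case.
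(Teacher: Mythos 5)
Your proposal leaves the two decisive steps unproven. First, the reduction to the multidegrees $(1,1,d-1,d-1)$ and $(1,2,d-2,d-1)$: as you note yourself, $I'\subset I$ only gives $h_I(d-4)\le h_{I'}(d-4)$, so computing $h_{I'}(d-4)$ for the other admissible multidegrees rules out nothing, and the ``propagation by Gorenstein duality and the Macaulay/Green inequalities'' that you appeal to is precisely the step you then declare to be the main obstacle, without supplying an argument. Moreover Lemma~\ref{lemdims} gives no upper bound on $d_2$ and $d_1$ at all (nothing in your toolkit excludes, say, $(2,2,d-2,d-2)$). The paper does not obtain this step by such combinatorics: it uses Gorenstein duality to turn the hypothesis into a bound in degree $d$ and then invokes Voisin's structure theorem (the proof of Proposition 1.1 in \cite{VoiCon}) to conclude that $I_k$ coincides for $k\le d-4$ with the ideal of a line or of a conic; only after that does Lemma~\ref{lemdims} pin down the remaining degrees, and even then a separate colon-ideal/duality argument is needed to exclude the multidegree $(1,2,d-1,d-1)$. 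This structural input is not a routine Macaulay computation, and your plan contains no substitute for it.

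Second, the passage back from $I$ to $X_{\sing}$ is incorrect as stated: you claim the linear (resp.\ linear and quadratic) forms in $I$ ``lift to $J$'', i.e.\ vanish at all nodes, but $I$ is merely some Artinian Gorenstein ideal containing $J_H$, and its low-degree elements need not come from $J$. Indeed the lemma allows up to $2(d-2)(d-1)$ nodes, and in the $(1,1,d-1,d-1)$ case the nodes beyond the first $(d-1)^2$ need not lie on any plane, so your premise $X_{\sing}\subset\pi$ (on which the $\binom{d}{2}$ plane-curve count rests) cannot be forced; the conclusion of the lemma is only that a \emph{subset} of $X_{\sing}$ is a complete intersection. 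The paper bridges this gap with different arguments: in the line case a dimension count on $h_J(d-2)$ showing that the base locus of $J_{d-2}$ contains a plane, followed by Noether--Lefschetz tangent-space arguments via \cite{GreenF} to get $L\subset X_H$ and hence $P\subset X$; in the conic case a comparison of Hilbert polynomials, $p_J\le 2(d-2)(d-1)=p_{I'}$, forcing $J=I'$. Without something of this kind your final paragraph does not connect the ideal $I$, which lives in the hyperplane, to the singular locus of $X$ in $\Ps^4$.
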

\begin{proof} 
Let $S\subset X_{\sing}$ be a minimal subset such that the linear system of polynomials of degree $2d-5$ vanishing at $S$ has defect one. Let $J=I(S)$.
Then $h_{J_H}(2d-4)=1$ and $h_{J_H}(2d-5)=0$. Let $I$ be the ideal containing $J_H$, such that $S/I$ is Artinian Gorenstein of socle degree $2d-4$.

Suppose that $h_I(d)\leq 2d-4$. Then from the proof of \cite[Proposition 1.1]{VoiCon} in Section 1 of \textit{loc. cit.} it follows that there exists either a line $L$ or a conic $C$ such that $I_k=I(L)_k$ for $k\leq d-4$ or $I_k=I(C)_k$ for $k \leq d-4$.

Suppose first that $I_{d-4}$ is the degree $d-4$-part of the ideal of a line $L$.  
Using the notation of Lemma~\ref{lemdims} we have that $d_3=d_2=d_1=1$. Since the base locus of $I_{d-1}$ is empty it follows that $d_0\leq d_{-1}\leq d-1$. From Lemma~\ref{lemdims} it follows that $\sum_{i=-1}^{3} d_i\geq 2d+1$. In particular, $d_0=d_{-1}=d-1$ holds and $I$ contains a complete intersection ideal $I'$ of multidegree $(1,1,1,d-1,d-1)$. Since both $S/I$ and $S/I'$ are Artinian Gorenstein rings of socle degree $2d-4$ we have $I=I'$.

We are now going to show that the base locus $B$ of $J_{d-2}$ contains a plane. 
We claim that every component of $B$ has dimension at most 2, and that $B$ contains a component of dimension 2:

Suppose that first the base locus of $J_{d-2}$ would have dimension at least 3. Then $h_{J}(d-2)\geq h_{\Ps^3}(d-2)=\frac{1}{6}(d+1)d(d-1)$. This would imply that
\begin{eqnarray*} h_J(2d-4)&=&h_J(d-2)+\sum_{k=d-1}^{2d-4} h_{J_H}(k)\\&\geq& \frac{1}{6}(d+1)d(d-1)+\frac{1}{2}(d-1)(d-2)\\&>& 2(d-1)(d-2)\end{eqnarray*}
contradicting the fact that the length of $V(J)$ is at most $2(d-2)(d-1)$.

Hence the base locus $B$ of $J_{d-2}$ is of dimension at most two. The base locus of $I_{d-2}$ is a $L$ and is contained in $B\cap H$. Hence one of the irreducible components of $B$ is a plane $P$. 

We will now show that $P\subset X$. For this it suffices to show that $L\subset X_H$.
Recall that $J_d$ is the tangent space to space of deformations of $X$ where the points in $S$ deform to nodes. These equisingular deformations have then also defect. Hence $I_H\cap V(x_4)$ is contained in the tangent space $T \NL(\gamma)$ of the component $\NL(\gamma)$ 
of the Noether-Lefschetz locus of smooth surfaces of degree $d$ in $H$.
From standard arguments in Noether-Lefschetz theory it follows that $T \NL(\gamma)\otimes S_{d-4}$ has codimension at least one in $S_{2d-4}$ (see e.g., \cite{GreenF}). This space contains $I_d\otimes S_{d-4}$ and since $I$ is generated in degree $<d$ it follows $T \NL(\gamma)\otimes S_{d-4}=I_{2d-4}$. From the results in \cite{GreenF} it follows now that $T \NL(\gamma)=I_d$, and that the line $L$ is contained in $X_H$.

Suppose we are now in the case that $I_{d-4}$ is the degree $d-4$ part of the ideal of a conic. Without loss of generality we may assume that  the conic is defined by $x_0=x_1=f(x_2,x_3,x_4)=0$. Since $I_{d-1}$ is base point free  we can find  two further elements $f_1,f_2\in I$ of degree at most $d-1$, such that $x_0,x_1,f,f_1,f_2$  form a regular sequence of multidegree $(d_3,d_1,d_1,d_0,d_{-1})$. From Lemma~\ref{lemdims} it follows that $d_0+d_{-1}\geq 2d-3$. Since $d_{-1}\leq d-1$ we have two possibilities, namely $(d_0,d_{-1})$ equals $(d-2,d-1)$ or $(d-1,d-1)$.

Suppose first that $d_0=d-2$. Then  $I$ contains a complete intersection ideal  $I'$ of $(1,1,2,d-2,d-1)$. From $I_{2d-4}=I'_{2d-4}$ it follows that $I=I'$.
From this it follows that $J$ is contained in a complete intersection ideal of multidegree $(1,2,d-2,d-1)$. Using that $p_J\leq 2(d-2)(d-1)$ and that $p_{I'}=2(d-2)(d-1)$ it follows that $J=I'$ and that $X_{\sing}$ is a complete intersection of multidegree $(1,2,d-2,d-1)$.

Suppose now that $d_0=d-1$. Then $I$ contains a complete intersection ideal $I'$  of multidegree $(1,2,d-1,d-1)$. From e.g. \cite[Section 1]{OtwHil} it follows that if $I_1$ and $I_2$ are homogeneous ideals such that $I_1\subset I_2$ and both $S/I_1$ and $S/I_2$ are Artinian Gorenstein of socle degree $N+k$ and $N$ then $I_2=(I_1:F)$ for some form $F$ of degree $k$. We can apply this to $(I_1,I_2)=(I',I)$ and we find that
$I=(I':h)$ for some linear form $h$. Note that the base locus of $I'_{d-2}$ consists of a conic. If the base locus of $I_{d-2}$ is also a conic then we have that $h_{I}(k)=2k+1$ for $k\leq d-2$. Using Gorenstein duality we get that
\[ \sum_{k=0}^{2d-4} h_I(k)\geq 2\sum_{k=0}^{d-3}2k+1+2d-3=2(d-2)(d-1)+1\]
Since $J_H\subset I$ it follows that $p_J\geq h_J(2d-4)>2(d-2)(d-1)$, a contradicting. Hence the base locus of $I_{d-2}$ is not a conic. Since the base locus of $I_{d-4}$ is a conic and the base locus of $I_{d-2}$ is one dimensional we have that the conic is a union of two line lines $h_1h_2=0$ and one of the lines, say $h_1=0$ is contained in the base locus of $I_{d-2}$ and the linear form $h=h_2$. Recall that $I=(I':h_2)$. Since $h_1h_2\in I'$ it follows that $h_1\in I$ and therefore that the base locus of $I_1$ is contained in a line, a contradiction, hence $d_0=d-1$ is impossible. 
\end{proof}

\begin{lemma} \label{lemHilbb} Suppose $d\geq 7$. Let $X\subset \Ps^4$ be a nodal threefold of degree $d$ with defect.
Let $J$ be the Jacobian ideal of $X$, $H=\{\ell=0\}$ a general hyperplane, $J_H=(J,\ell)$. Let $I$ be an ideal containing $J_H$, such that $I$ is Artinian Gorenstein of socle degree $2d-4$.

If $h_I(d-4)> 2d-7$ then $X_{\sing}$ consists of at least $2 (d-1)(d-2)+1$ points.
\end{lemma}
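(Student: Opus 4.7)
The plan is to lower-bound $\#X_{\sing}$ via the chain
\[
 \#X_{\sing}=p_J\geq h_J(2d-4)=\sum_{k=0}^{2d-4}h_{J_H}(k)\geq\sum_{k=0}^{2d-4}h_I(k)=\length(S/I),
\]
so it suffices to show $\length(S/I)\geq 2(d-1)(d-2)+1$. The two levers to exploit are the Gorenstein symmetry $h_I(k)=h_I(2d-4-k)$ and the base-point-freeness of $I_{d-1}$ (the partials of a defining equation of the smooth surface $X_H$ lie in $J_H\subset I$). I will lower-bound $h_I(k)$ for every $k$ and sum.

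First, I propagate the hypothesis downward. A direct Macaulay-expansion calculation gives $(2m+2)_{*m}=2m$, and at the low-degree end ($m\in\{2,3\}$) the expansion has $\epsilon_1=0$, triggering the strict inequality in Corollary~\ref{corMacDown}. Iterating Corollary~\ref{corMacDown} from $h_I(d-4)\geq 2d-6=2(d-4)+2$ therefore yields $h_I(k)\geq 2k+2$ for $3\leq k\leq d-4$, then $h_I(2)\geq 7$, then $h_I(1)>3$; combined with $\dim S_1=4$, this pins $h_I(1)=4$.

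Next, I pin down the central values. Gorenstein symmetry promotes $h_I(d-4)\geq 2d-6$ to $h_I(d)\geq 2d-6$. Suppose for contradiction that $h_I(d-1)=2d-7$: the Macaulay expansion of $2d-7$ in base $d-1$ (having $\epsilon_{d-1}=1$ plus $d-7$ zeros) gives $(2d-7)^{\langle d-1\rangle}=2d-6$, so Theorem~\ref{thmMac} forces $h_I(d)\leq 2d-6$, and Macaulay saturates. Theorem~\ref{thmGotz}, applied to the ideal generated by $I_{d-1}$, then forces $\dim V(I_{d-1})=\epsilon_{d-1}=1$, contradicting base-point-freeness. Hence $h_I(d-1)\geq 2d-6$, and by symmetry $h_I(d-3)\geq 2d-6$. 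A parallel Macaulay computation, using $(2d-8)^{\langle d-2\rangle}=2d-7$, rules out $h_I(d-2)\leq 2d-8$; so $h_I(d-2)\geq 2d-7$.

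Finally, summing using the Gorenstein symmetry,
\[
 \length(S/I)\geq 2\Bigl(1+4+7+\sum_{k=3}^{d-4}(2k+2)+(2d-6)\Bigr)+(2d-7)=2d^2-4d-7,
\]
which exceeds $2(d-1)(d-2)+1=2d^2-6d+5$ for every $d\geq 7$. The main obstacle is the Gotzmann step: Macaulay alone yields only $h_I(d-1)\geq 2d-7$, and it is the interplay of the persistence statement in Theorem~\ref{thmGotz} with the base-point-freeness of $I_{d-1}$ that supplies the decisive extra unit needed to close the argument.
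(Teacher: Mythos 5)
Your overall strategy -- pointwise lower bounds for $h_I(k)$ from Macaulay's theorem, Gotzmann plus base-point-freeness of $I_{d-1}$, and Gorenstein symmetry, then summing -- is essentially the route the paper takes for $d>7$, and your middle-degree steps ($h_I(d)\geq 2d-6$ by duality, $h_I(d-1)\geq 2d-6$ via the Gotzmann/base-point-free argument, $h_I(d-2)\geq 2d-7$) are correct. The gap is in the low degrees. Corollary~\ref{corMacDown} does not give $h_I(2)\geq 7$ or $h_I(1)=4$: from $h_I(3)\geq 8=\binom{4}{3}+\binom{3}{2}+\binom{1}{1}$ one computes $8_{*3}=5$, and the strictness clause ($\epsilon_1=0$) only improves this to $h_I(2)\geq 6$; then $6=\binom{4}{2}$ has $\epsilon_1=-1$, so one only gets $h_I(1)\geq 3$. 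No Hilbert-function argument can do better, because the sequence $1,3,6,8,\dots,8,6,3,1$ is Macaulay-admissible, Gorenstein symmetric of socle degree $2d-4$, and satisfies your hypothesis $h_I(d-4)>2d-7$ when $d=7$; it has $h_I(1)=3$ and $h_I(2)=6$. With the corrected values your sum becomes $2d^2-4d-11$ (or $2d^2-4d-10$ if you also exclude $h_I(d-2)=2d-7$, as the paper does via a base-component argument), and this exceeds $2(d-1)(d-2)$ only for $d\geq 8$; your claimed total $2d^2-4d-7$ rests entirely on the two unjustified low-degree values.

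Consequently the case $d=7$, which the statement explicitly includes, is not covered, and it cannot be recovered by these counting tools: for $d=7$ the general bounds only give $\sum_k h_I(k)\geq 2(d-1)(d-2)=60$, and the extremal Hilbert function $1,3,6,8,8,8,8,8,6,3,1$ has to be excluded by genuinely geometric means. This is exactly what the bulk of the paper's proof does: it studies the base loci of $I_k$, invokes Lemma~\ref{lemdims} to show $I$ would contain a complete intersection ideal of multidegree $(1,3,4,6)$, $(1,3,5,5)$, $(1,3,5,6)$ or $(1,3,6,6)$, and rules out each case by socle-degree comparison or by writing $I=(I':h)$ for a linear or quadratic form $h$ and deriving a contradiction with $h_I(1)=3$, $h_I(2)=6$. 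Your proposal contains no substitute for this step, so as it stands it proves the lemma only for $d\geq 8$. A minor further point: in the opening chain, the steps $p_J\geq h_J(2d-4)$ and $h_J(2d-4)=\sum_k h_{J_H}(k)$ are automatic for the saturated ideal of the nodes but not for the Jacobian ideal itself (a non-saturated ideal can have Hilbert function exceeding its Hilbert polynomial, and multiplication by $\ell$ need not be injective on $R/J$); the count should be run with the ideal of $X_{\sing}$, as in Lemma~\ref{lemHilba}.
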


\begin{proof} 
Let $h(k)=2k+1$ for $k\leq d-3$, $h(d-2)=2(d-2)$ and $h(k)=h(2d-4-k)$ for $d-1\leq k \leq 2d-4$. Then $h$ is the Hilbert function of a complete intersection ideal of multidegree $(1,2,d-2,d-1)$. 

Suppose that  for some $k$ we have that $h_I(k)>2k+1$. Then from Theorem~\ref{thmMac} it follows that $h_I(j)>2j+1=h(j)$ for $2\leq j \leq k$.

In our case we have that $h_I(d-4)>2d-7$. Hence  $h_I(k)>h(k)$ for $2\leq k \leq d-4$. Using Gorenstein duality we get $h_I(k)>h(k)$ for $d\leq k \leq 2d-6$. In particular, \[\sum_{k\neq d-3,d-2,d-1} h_I(k)-h(k)\geq 2(d-5)=2d-10.\]

From $h_I(d)\geq 2d-6$ it follows from  Theorem~\ref{thmMac} that $h_I(d-1) \geq 2d-7$. However, if $h_I(d-1)$ equals $2d-7$ then  $h_I(d)$ equals $2d-6$ and the Hilbert polynomial of the base locus of $I_{d-1}$ equals $k+(d-6)$ by Theorem~\ref{thmGotz}. In particular, the base locus of $I_{d}$ contains a line, which contradicts the fact that it is empty. Hence $h_I(d-1)\geq 2d-6$. Similarly, if $h_I(d-2)= 2d-7$ then $I_{d-1}$ has a base component. Since $I_{d-1}$ is base point free this is not possible.
This implies that $\sum_{k=d-3}^{d-1} h_I(k)-h(k)\geq -4$. In particular, if $2d-14>0$ then $\sum h_I(k)>\sum h(k)=2(d-2)(d-1)$. This finishes the proof in the case  $d>7$.

If $d=7$ then $\sum h_I(k)\geq 2(d-2)(d-1)$. Suppose now that equality holds. Then $h_I$ takes the following values $1,3,6,8,8,8,8,8,6,3,1$. This implies that the base locus in degree 1 and 2 is a plane. The base locus of $I_3$ has dimension at most one. From Lemma~\ref{lemdims} it follows that the dimension of the base locus is at least one. Since $h_I(3)=8$ it follows that the base locus of $I_3$ is the intersection of two plane cubics having either a line or a conic as a  common component.

From Lemma~\ref{lemdims} it follows that $I$ contains a complete intersection ideal $I'$ of multidegree $(1,3,4,6)$, $(1,3,5,5)$, $(1,3,5,6)$ or $(1,3,6,6)$.

In the first two case we would have that the socle degree of $S/I'$ is $10$. Since $S/I$ has also socle degree 10 this implies that  $I=I'$. However $I$ and $I'$ have different Hilbert functions, hence this is not the case.

If $I'$ is  of multidegree $(1,3,5,6)$ then there exists a linear form $h$ such that $I=(I':h)$. From this it follows that the base locus of $I'_k$ is contained in the base  locus of $I'_{k-1}$ union with $V(h)$. Note that the base locus of $I'_4$ is a cubic curve $C$ and the base locus of $I_3$ is the intersection of two cubics. Hence  the base locus of $I_3$ is a conic $Q=0$ together with a point and $V(h)$ is a component of $C$. Moreover we have that $Qh\in I'_3$. This implies that $Q\in (I':h)=I$, contradicting that $h_I(2)=6$, hence we can exclude this case.

If $I'$ is  of multidegree $(1,3,6,6)$ then there exists a quadratic form $h$ such that $I=(I':h)$. From this it follows that the base locus of $I'_k$ is contained in the base  locus of $I'_{k-2}$ union with $V(h)$. Note that the base locus of $I'_5$ is a cubic curve $C$ and the base locus of $I_3$ is the intersection of two cubics. Hence the base locus of $I_3$ is contains either a line $L=0$ or a conic $Q=0$, and this curve is a component of $C$. In the first case we have $Lh\in I'_3$ and by construction that $L\in I_1$, a contradiction. In the second case we have that $h=h_1h_2$ and $Qh_1\in I'_3$. This implies that $Qh_1h_2\in I'_4$ and that $Q\in (I':h)=I$. A contradiction.
\end{proof}

\begin{theorem}[Ciliberto-Di Gennaro conjecture] Suppose $d\geq 7$. Let $X$ be a nodal hypersurface of degree $d$, with at most $2(d-1)(d-2)$ nodes. Then one of the following holds
\begin{enumerate}
\item $X$ is factorial.
\item $X$ contains a plane and $X$ has at least $(d-1)^2$ nodes.
\item $X$ contains a quadric surface and $X$ has at least $2(d-1)(d-2)$ nodes.
\end{enumerate}
\end{theorem}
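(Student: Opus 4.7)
The plan is to combine Lemmas~\ref{lemHilba} and~\ref{lemHilbb} with a symbolic-power argument for reduced complete intersections. First I would assume $X$ is non-factorial; since $X$ is a nodal threefold in $\Ps^4$, the remark in Section~\ref{secNodCI} relating the defect to the failure of Weil divisors to be Cartier shows that $X$ then has positive defect, so the preceding lemmas apply. Let $J$ be the ideal of $X_{\sing}$, let $H=\{\ell=0\}$ be a general hyperplane, and set $J_H=(J,\ell)$. As in the opening of the proof of Theorem~\ref{thmHS}, positive defect forces $h_{J_H}(2d-4)>0$; fixing a codimension-one subspace $W\subset S_{2d-4}$ containing $(J_H)_{2d-4}$ and setting $I_e=\{g\in S_e : gS_{2d-4-e}\subset W\}$ for $e\leq 2d-4$ produces an ideal $I\supset J_H$ with $S/I$ Artinian Gorenstein of socle degree $2d-4$.

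Next I would feed this $I$ into Lemma~\ref{lemHilbb}: if $h_I(d-4)>2d-7$ then $X_{\sing}$ would contain at least $2(d-1)(d-2)+1$ points, contradicting the hypothesis (this is where the assumption $d\geq 7$ enters). Hence $h_I(d-4)\leq 2d-7$, and Lemma~\ref{lemHilba} then produces a subscheme $Z\subset X_{\sing}$ which is a complete intersection $V(f_1,f_2,f_3,f_4)$ of multidegree either $(1,1,d-1,d-1)$ or $(1,2,d-2,d-1)$. The two cases give $|Z|=(d-1)^2$ and $|Z|=2(d-1)(d-2)$ respectively, yielding the claimed lower bounds on the number of nodes.

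To extract the plane (respectively quadric surface) contained in $X$, I would imitate the last paragraph of the proof of Theorem~\ref{thmHS}. Since each point of $Z$ is a node of $X$, the defining equation $f$ vanishes to order two at every point of $Z$; as $Z$ is a reduced complete intersection, its second symbolic and ordinary powers of $I(Z)$ coincide, so $f$ lies in the ideal generated by the products $f_if_j$ with $i\leq j$. A short degree check, valid for $d\geq 7$, shows that in both multidegrees every such product of degree $\leq d$ is divisible by $f_1$ or $f_2$ (in the $(1,2,d-2,d-1)$ case one uses $f_2f_4$ having degree $d+1>d$ and $f_3^2$ having degree $2d-4>d$). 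Consequently $f\in(f_1,f_2)$, and $X$ contains $V(f_1,f_2)$, which is a plane in the first case and a quadric surface in the second.

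The main obstacle lies not in this final assembly but inside Lemma~\ref{lemHilba}: the lemma has to analyse every possible low-degree base-locus configuration of the Artinian Gorenstein ideal $I$ and show that only the two complete-intersection types above can arise. In particular, the borderline case in which $I$ contains a complete intersection of multidegree $(1,2,d-1,d-1)$ must be excluded via the link identity $I=(I':h)$ for nested Gorenstein ideals, combined with a Gorenstein-duality count relying on Corollary~\ref{corGreen}. Once those structural lemmas are granted, the deduction of the containment statement is essentially a symbolic-power and degree-counting exercise.
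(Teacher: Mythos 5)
Your proposal is correct and follows essentially the same route as the paper: non-factoriality gives positive defect, Lemmas~\ref{lemHilbb} and~\ref{lemHilba} (with the node bound and $d\geq 7$) force $X_{\sing}$ to contain a complete intersection of multidegree $(1,1,d-1,d-1)$ or $(1,2,d-2,d-1)$, and the final degree-counting step shows $f\in(f_1,f_2)$, so $X$ contains the plane or quadric. Your use of the equality of symbolic and ordinary squares for a complete intersection is just a reformulation of the paper's ``local coordinates at each node'' argument, so there is no substantive difference.
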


\begin{proof}
Suppose $X$ is not factorial.
It follows directly from Lemma~\ref{lemHilba} and~\ref{lemHilbb} that $X_{\sing}$ contains a complete intersection $\Sigma$ either of multidegree $(1,1,d-1,d-1)$ or of multidegree $(1,2,d-2,d-1)$. In the first case $\Sigma$ consists of $(d-1)^2$ points in the second case of $2(d-2)(d-1)$ points.

Let $f_1,f_2,f_3,f_4$ be the generators of $I(\Sigma)$, ordered by degree. Since the points of $\Sigma$ are on $X$ it follows that $f\in(f_1,f_2,f_3,f_4)$. Write $f=\sum h_if_i$. Since the points of $\Sigma$ are in $X_{\sing}$ and the $f_i$ form a system of local coordinates at each point of $\Sigma$ it follows that $h_i\in I(\Sigma)$. In particular $f$ is in the ideal generated by $f_if_j$. Such a product is of degree at most $d$ only if one of $i,j$ is at most 2. Hence $f\in (f_1,f_2)$ and therefore $X$ contains either a plane or a quadric surface, depending on the multidegree of the complete intersection.
\end{proof}

\begin{remark}
 This result also implies Theorem~\ref{thmHS}. However, in the above proof we used results from \cite{VoiCon}, which we can avoid in the proof of Theorem~\ref{thmHS}.
\end{remark}

\bibliographystyle{plain}
\bibliography{remke2}

\end{document}